\RequirePackage{iftex}
\ifXeTeX
  \PassOptionsToClass{nohypdvips}{siamart251216}
  \makeatletter
  \disable@package@load{breakurl}{\PackageInfo{ex_article}{Using native XeTeX URL links}}
  \makeatother
\fi
\documentclass[hidelinks,onefignum,onetabnum]{siamart251216}

\usepackage{amssymb,mathtools}
\usepackage{booktabs,array,enumitem,microtype}
\usepackage{needspace}
\newsiamremark{remark}{Remark}
\newcommand{\R}{\mathbb R}
\newcommand{\K}{\mathcal K}

\newcommand{\vol}{\operatorname{vol}}

\newcommand{\spann}{\operatorname{span}}

\newcommand{\algSCR}{\textnormal{SCR}}
\newcommand{\algRCR}{\textnormal{RCR}}
\newcommand{\sig}{\sigma_\ell}
\title{Superlinear Convergence of Cyclic Gradient Methods with Historical Ritz Step Sizes\thanks{Manuscript dated 22 October 2026.\funding{This work was supported by the National Key R\&D Program of China (grant 2022YFA1003800), the National Natural Science Foundation of China (grant 12671364), and the Natural Science Foundation of Tianjin (grant 25JCJQJC00300).}}}
\author{Ran Gu\thanks{NITFID, School of Statistics and Data Science, LPMC, KLMDASR, LEBPS and AAIS, Nankai University, Tianjin 300071, China (\email{rgu@nankai.edu.cn}).}}
\headers{Superlinear Convergence with Historical Ritz Steps}{Ran Gu}
\ifpdf
\hypersetup{pdftitle={Superlinear Convergence of Cyclic Gradient Methods with Historical Ritz Step Sizes},pdfauthor={Ran Gu}}
\fi

\begin{document}
\maketitle
\begin{abstract}
Simplified spectral models predict dimension-dependent superlinear
convergence for gradient methods, but the corresponding behavior of
actual iterations remains only partially understood. Historical methods
introduce an additional difficulty: the gradients defining the step sizes
differ from those receiving them. We study this offset for cyclic reuse
of historical Ritz step sizes on strictly convex quadratics.
Within the existing limited memory steepest descent with retards
(LMSDR) family, a rolling cyclic Ritz (RCR) rule reuses $m$ historical
Ritz values $\ell$ times per cycle, ending with a complete sweep.
At $q=m+1$ active distinct eigenvalues, an exact delayed recurrence
gives almost-everywhere doubly exponential convergence with an explicit
rate and exceptional weights. Nonempty open basins also exist for
$m+1<q\le2m$ and $\ell\ge4$.
Rebuilding the history from each cycle-start gradient costs $m$
additional updates and gives sampled cyclic Ritz reuse (SCR) a
cycle-end map determined by the current weights. Under uniform sampling
bounds, SCR converges doubly exponentially almost everywhere when
$(2\ell-1)^m>\binom{q-1}{m}$. Increasing $\ell$ admits larger $q$ for fixed $m$.
The guarantee extends to history-dependent choices from finite sampling
menus under a bound on the number of admissible sampling sequences.
Geometrically spaced decisions retain the SCR condition and rate range.
Every update is counted, including sampling; a numerical illustration
compares finite-tolerance update counts.
\end{abstract}
\begin{keywords}
Spectral gradient method; superlinear convergence;
historical Ritz values; cyclic reuse; adaptive sampling.
\end{keywords}
\begin{MSCcodes}
65K05, 90C20, 65F10.
\end{MSCcodes}

\section{Introduction}

Changing only the scalar step along the negative gradient can produce
convergence much faster than linear. The two-dimensional
Barzilai--Borwein (BB) method is a classical example~\cite{BB,Dai2013}.
We use BB1 to denote its long-step variant, defined in
Section~\ref{sec:algorithms}.
For spectral gradient methods, a basic question is whether the
dimension-dependent superlinear behavior predicted by simplified
models describes actual iterations. Understanding this connection
can also inform step-size design beyond the regimes where full-space
superlinearity is proved.

\subsection{Spectral models and their role in step-size design}

Dai and Fletcher~\cite{DaiFletcher} developed simplified logarithmic
recurrences by omitting selected terms from the exact updates. Their
analysis gives a three-dimensional superlinear and four-dimensional
linear picture for the simplified BB model, and a dimension--cycle-length
relation for cyclic steepest descent (CSD). Gu and Du~\cite{GuDu}
extended this approach to limited memory methods. Unlike limited memory
steepest descent (LMSD), which uses $m$ Ritz step sizes per sweep, their
modified method (MLMSD) selects one candidate per refresh. With selection
based on the oldest stored gradient, its model predicts superlinear
behavior at spectral dimensions of order $m^2$, compared with order $m$
for LMSD; $m$ is the memory length.\footnote{The selection uses projected
components of the oldest stored gradient; the Ritz space still uses
the whole history block. The actual safeguarded MLMSD iteration has
a proved R-linear guarantee.}
Tests on 1000-variable quadratics report substantial gains outside
that model regime; a preliminary strictly convex nonquadratic test
at the same size also illustrates its potential
\cite[Sections~5--6]{GuDu}. Two-dimensional quadratic termination has likewise motivated
BB-based methods for general optimization~\cite{HuangDaiLiuTermination}.
Restricted-spectrum analysis can therefore provide design information
even when full-space superlinearity is not expected.

One possible explanation for these practical gains beyond the analyzed
spectral regime is successive suppression of dominant components:
accurate Ritz filters reduce selected components, others become dominant,
and the extracted information changes. A
general high-dimensional rate theorem would also have to control
weak-component growth and the time between changes of dominance.
Here this interpretation motivates the analysis; the theorems below
prove actual convergence in specified spectral regimes. The relevant
dimension counts active distinct eigenvalues, which need not equal
the number of variables.

\subsection{Actual convergence and the historical offset}

Recent actual-iteration results make progress on both sides of the
model predictions. Li, Jiang, and Hong~\cite{LiJiangHong2026} construct,
for every $n\ge4$, an open family of quadratics and initial points
where the BB1 gradient norm has geometric lower and upper bounds,
excluding R-superlinear convergence. On the superlinear side,
Gu~\cite{GuCSD} proves almost-everywhere doubly exponential convergence
of actual CSD when $2\ell>q$, with cycle length $\ell$ and $q$ active
distinct eigenvalues. This matches the superlinear-side model threshold.
The planar CSD recurrence and two-spectrum classification are studied
in~\cite{ShenLiDaiCSD,LiWangTwoSpectrum}; exceptional linear
trajectories~\cite{LiWangCounter} show why the initial-point quantifier
matters. Almost-everywhere R-superlinearity of actual three-dimensional
BB1 remains unresolved.

A normalized spectral state records relative component sizes but not
the overall gradient magnitude. Normalized spectral measures, attractors,
and asymptotic rates have been analyzed for a class containing exact
steepest descent but excluding BB~\cite{PronzatoWynnZhigljavsky}; limiting
directions of restarted Krylov methods form a related research
line~\cite{FaberLiesenTichy,FaberEtAl2026}. A rate theorem must connect
such relative dynamics to decay of the original gradient norm.

Historical Ritz methods make this connection harder because the
defining and receiving gradients differ. CSD computes its frozen
step from the current cycle-start gradient; its cycle-boundary map
therefore has no historical delay. In contrast, BB1 and cyclic
BB1~\cite{CBB} compute a step from the preceding gradient, while
Fletcher's LMSD~\cite{Fletcher} extracts several Ritz values from a
stored block. This historical offset means that the new Ritz polynomial
acts on a gradient already transformed by intervening updates. Controlling the
resulting cancellations and changes of dominance, with a realizable
history, is the additional task. Growth of spectral ratios alone
does not establish decay of every gradient component.

This leads to three connected questions. Can the actual delayed
history yield a precise superlinear theorem? Does this mechanism
survive competition between more spectral components? Can rebuilding
the history provide a broader sufficient condition while allowing
controlled choices of the sampling steps?

Our rolling rule belongs to LMSD with retards (LMSDR), the cyclic
limited-memory family proposed by Zou and Magoul\`es~\cite{ZouMagoules}.
We call this specified realization rolling cyclic Ritz reuse
(\algRCR): each of $m$ historical reciprocal Ritz values is used
$\ell$ times for a fixed integer $\ell\ge1$, and the last
$m$ updates form a complete sweep.
With matched initialization and ordering, full-rank histories, and
uninterrupted cycles, its constant-count specializations are
\[
\begin{array}{rcl}
\ell=1 &:& \text{LMSD},\\
m=1,\ \ell>1 &:& \text{cyclic BB1 (CBB1)},\\
m=\ell=1 &:& \text{BB1}.
\end{array}
\]
The name distinguishes the history window and ordering analyzed here,
rather than a new algorithm family. Related limited-memory implementations
and delayed-gradient variants are discussed
in~\cite{FerrandiHochstenbach,ZouMagoulesReview}.

For the underlying LMSD sweep, Curtis and Guo~\cite{CurtisGuo} prove
R-linear convergence under history-rank and normalized-conditioning
assumptions. Yang and Yuan~\cite{YangYuanLMSD} establish global
R-linear convergence for rank-compressed weighted LMSD without those
assumptions. Their full-rank standard sweep matches our $\ell=1$
rule under these conventions; we use their complete-sweep subset
identity below.
Our question is when the actual historical iterations decay faster.

\subsection{Contributions and scope}

First, for each fixed $\ell$ and $q=m+1$, we obtain an exact doubly
exponential rate outside explicit exceptional initial weights.
Let $h_k$ be the oldest gradient in the history block defining the
cycle-$k$ normalized Ritz filter $p_k$.
After initialization,
\[
 h_{k+1}=p_k(A)^{\ell-1}p_{k-1}(A)h_k.
\]
The centered logarithmic spectral-weight ratios satisfy
\[
 w_{k+1}=(3-2\ell)w_k-2w_{k-1}.
\]
The state $(w_k,w_{k-1})$ tracks relative spectral weights. Accumulating
the loss of each component converts state growth into decay of the
actual gradient.

Second, uniform, cancellation-sensitive filter estimates give nonempty
open basins for $m+1<q\le2m$ and fixed $\ell\ge4$. Alternating dominant
spectral sets are disjoint at $q=2m$ and overlap in intermediate
dimensions, where we control shared components' log-amplitude offsets
relative to the intergroup gap.

Third, sampled cyclic Ritz reuse (\algSCR) uses $m$ additional updates
to rebuild the history with the cycle-start gradient as defining anchor.
Its cycle-end map depends only on the current spectral weights.
This allows a volume argument, combined with control of near-cancellation
and componentwise loss, to prove almost-everywhere doubly exponential
convergence when
\[
 (2\ell-1)^m>\binom{q-1}{m}.
\]
For fixed $m$, increasing $\ell$ admits larger $q$, as in
CSD~\cite{GuCSD}; the proved rolling range remains $q\le2m$.
A bound on the number of allowed sampling-list sequences permits
history-dependent choices from finite menus with fixed repetition count.
Geometrically spaced decisions preserve the SCR condition and rate range.
Both rules use gradients and
stored step lengths without additional Hessian--vector products.

Throughout, doubly exponential convergence means
$\|g_t\|\le C\exp(-c a^t)$ eventually, with $C,c>0$ and $a>1$,
and implies R-superlinear convergence. The results concern fixed strictly
convex quadratics in exact arithmetic; $q$ counts exact active distinct
eigenvalues. SCR counts all
$m(\ell+1)$ updates per cycle, including sampling; a short illustration
examines one admissible policy.

Table~\ref{tab:scope} compares model predictions and actual guarantees.
Its finite-menu entry uses $\rho=\binom{q-1}{m}/(2\ell-1)^m$ and the
word-count entropy $h$ of Section~\ref{sec:adaptiveA}; the guarantee
holds on a common full-measure set for all allowed selectors.

\begin{table}[htbp]
\centering
\begingroup
\footnotesize
\setlength{\tabcolsep}{3pt}
\renewcommand{\arraystretch}{1.05}
\begin{tabular}{@{}>{\raggedright\arraybackslash}p{0.20\linewidth}
                  >{\raggedright\arraybackslash}p{0.27\linewidth}
                  >{\raggedright\arraybackslash}p{0.49\linewidth}@{}}
\toprule
Algorithm and relation & Simplified-model analysis
& Actual iteration: results and remaining gap\\
\midrule
\textbf{BB1}\newline
$m=\ell=1$\newline
$1$ update
& $n\le3$: superlinear picture; $n\ge4$: linear
picture~\cite{DaiFletcher}.
& $n=2$: R-superlinear outside exceptional states~\cite{BB,Dai2013}.
$n=3$: almost-everywhere superlinearity open.
$n\ge4$: two-sided geometric gradient bounds on a constructed
open family~\cite{LiJiangHong2026}.\\
\midrule
\textbf{CSD}\newline
Current cycle anchor\newline
$\ell$ updates/cycle
& $n<2\ell$: superlinear; $n\ge2\ell$: linear
model picture~\cite{DaiFletcher}.
& $q<2\ell$: doubly exponential a.e.~\cite{GuCSD}.
\emph{Superlinear-side threshold matched.}
Exceptional linear trajectories occur~\cite{LiWangCounter};
no classification claimed for $q\ge2\ell$.\\
\midrule
\textbf{CBB1}\newline
LMSDR with $m=1$, $\ell>1$\newline
$\ell$ updates/cycle
& ---
& Local R-linear theory and explicit linear cycles at
$n=3$, $\ell=2$~\cite[Sections~2--3]{CBB}.
Here, $q=2$: exact doubly exponential rate outside explicit
exceptional weights (Theorem~\ref{thm:B1}).\\
\midrule
\textbf{LMSD}\newline
LMSDR with $\ell=1$\newline
$m$ updates/sweep
& Reported scale $n\le3m$; superlinear construction at
$n=3m$ uses three synchronized blocks~\cite{GuDu}.
& R-linear guarantees under respective assumptions and
rank-compression rules~\cite{CurtisGuo,YangYuanLMSD}.
Here, $q=m+1$: exact-rate doubly exponential decay a.e. for
uninterrupted full-rank sweeps ($\ell=1$, Theorem~\ref{thm:B1});
the full model range is not covered.\\
\midrule
\textbf{MLMSD}\newline
Single-candidate selection\newline
$1$ update/refresh
& Oldest-gradient selection: reported scale
$n\le2m(m+1)-1$, $m\ge2$; superlinear orbit constructed at
the upper endpoint~\cite{GuDu}.
& R-linear guarantee~\cite{GuDu}; the model superlinear range
remains unproved for actual iterations.\\
\midrule
\textbf{LMSDR}\newline
RCR rule analyzed here\newline
$m\ell$ updates/cycle
& ---
& Here, $q=m+1$: explicit doubly exponential rate a.e., each fixed
$\ell\ge1$. For $m+1<q\le2m$, fixed $\ell\ge4$: nonempty open
basins with the same rate formula, without a general-initialization
guarantee.\\
\midrule
\textbf{Sampled LMSDR variant}\newline
SCR here\newline
$m(\ell+1)$ updates/cycle
& ---
& Here, $(2\ell-1)^m>\binom{q-1}{m}$: doubly exponential decay a.e.
Finite menus with $h+\log\rho<0$: all allowed history-dependent
selectors on a common full-measure set.\\
\bottomrule
\end{tabular}
\endgroup
\caption{Model predictions and actual-iteration guarantees, grouped by
algorithm and relation to the specified LMSDR rule. A.e.\ means almost
every initial point.}
\label{tab:scope}
\smallskip
\begin{minipage}{\linewidth}
\footnotesize
\textit{Notes.} LMSDR is the family of~\cite{ZouMagoules}; RCR is the
specified unsampled rolling rule. SCR rebuilds the defining history
through fresh sampling; its guarantees do not apply to unsampled LMSDR.
Specializations require matched initialization, ordering, rank, and
uninterrupted sweeps. Model dimension $n$ counts distinct spectral
components in the cited constructions; $q$ counts exact active distinct
eigenvalues. Model entries do not imply actual convergence; a dash means
no model comparison is summarized. Costs include initialization and
sampling updates. Rolling basin conditions are in Section~\ref{sec:two}.
\end{minipage}
\end{table}

Section~\ref{sec:algorithms} defines both algorithms.
Sections~\ref{sec:codimone}--\ref{sec:two} prove the rolling results;
Section~\ref{sec:A} gives the sampled theorem, finite-menu extension,
and illustration. Appendix~\ref{app:constants} records the basin constants.

\section{Historical Ritz extraction and the two algorithms}\label{sec:algorithms}

We define the common historical estimator and the rolling and sampled
rules, beginning with the quadratic model.

\subsection{Quadratic model and historical Ritz extraction}

Let $x,b\in\R^n$ and $A\in\R^{n\times n}$. Consider
\begin{equation}\label{eq:quad}
 f(x)=\tfrac12 x^TAx-b^Tx,\qquad A=A^T\succ0,\qquad
 g_t=Ax_t-b,\qquad x_*=A^{-1}b.
\end{equation}
Unless a subscript specifies otherwise, $\|\cdot\|$ denotes the
Euclidean vector norm or the induced matrix norm; $\|\cdot\|_F$
denotes the Frobenius norm. The index $t$ counts individual gradient
updates, while $k$ below counts cycles. A gradient update with step
size $\alpha_t>0$ satisfies
\begin{equation}\label{eq:update}
 x_{t+1}=x_t-\alpha_tg_t,\qquad
 g_{t+1}=(I-\alpha_tA)g_t.
\end{equation}
Let $0<\lambda_1<\cdots<\lambda_q$ denote the distinct active eigenvalues,
with orthogonal spectral projectors $E_i$. Thus $E_i g_0\ne0$ for each
listed index. The analysis takes place on this active invariant subspace.
When statements about openness or Lebesgue measure are made in the full
space, we take these to be all distinct eigenvalues of $A$ and assume
full support. Otherwise the corresponding statements are relative to the
specified active subspace. Repeated eigenvalues are represented by the
projection norms $\|E_i g\|$, not by individual coordinates.

In particular, the spectral dimension $q$ need not equal the variable
dimension $n$. For example, $A=\beta I+UU^T$ with $\beta>0$ and
$U\in\R^{n\times r}$ has at most
$\operatorname{rank}(U)+1\le r+1$ distinct eigenvalues, independently
of $n$. Nearby eigenvalues, however, cannot be identified as one
without a perturbation analysis.

For clarity, R-superlinear convergence of a nonnegative error sequence
$e_t\to0$ means $\limsup_{t\to\infty}e_t^{1/t}=0$ (root-superlinear
convergence), equivalently domination by a Q-superlinearly decaying
sequence. Q-superlinear convergence means $e_{t+1}/e_t\to0$ when the
errors are positive. In the quadratic setting,
$\lambda_1\|x_t-x_*\|\le\|g_t\|\le\lambda_q\|x_t-x_*\|$
on the active subspace, so R-superlinear and doubly exponential
bounds transfer between these two errors.

For a positive error $e_t\to0$, its R-order is
$\liminf_{t\to\infty}(-\log e_t)^{1/t}$\cite[Section~9.2]{OrtegaRheinboldt}.
Thus $e_t\le C\exp(-ca^t)$ gives R-order at least $a$, with equality
under a matching lower envelope.

The memory length $m$ is a positive integer. We analyze $m<q$ unless
finite termination is explicitly considered. At a time $t\ge m$, form the consecutive historical matrices
\begin{equation}\label{eq:history}
 H_t=[g_{t-m},\ldots,g_{t-1}],\qquad
 Y_t=\left[\frac{g_s-g_{s+1}}{\alpha_s}\right]_{s=t-m}^{t-1}.
\end{equation}
If $H_t$ has full column rank, take its thin QR factorization
$H_t=QR$, where $Q\in\R^{n\times m}$ has orthonormal columns and
$R\in\R^{m\times m}$ is nonsingular and upper triangular.
The historical Ritz values are the eigenvalues of
\begin{equation}\label{eq:small}
 B_t=Q^TY_tR^{-1}=Q^TAQ.
\end{equation}
The first expression is the implementation; the second is an identity
used in the quadratic analysis. All columns of $Y_t$ are available from
actual gradient differences. For a vector $h$, define the Krylov
subspace
$\K_m(A,h):=\spann\{h,Ah,\ldots,A^{m-1}h\}.$

\begin{lemma}\label{lem:history}
If the preceding $m$ step sizes are nonzero and the history has rank $m$,
then
$\spann H_t=\K_m(A,h),\qquad h=g_{t-m}.$
For $u_i=\|E_i h\|^2>0$ and $m<q$, the monic polynomial
$\pi(\xi)=\det(\xi I_m-B_t)$ is the unique degree-$m$ polynomial
satisfying
\begin{equation}\label{eq:orthogonal}
 \sum_{i=1}^q u_i\pi(\lambda_i)\lambda_i^a=0,
 \qquad a=0,\ldots,m-1.
\end{equation}
Its roots are simple and lie in $(\lambda_1,\lambda_q)$.
\end{lemma}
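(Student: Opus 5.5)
The plan is to argue inductively on the gradient recurrence \eqref{eq:update}, then reduce the characterization of $\pi$ to the classical theory of orthogonal polynomials for a discrete measure.

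\emph{Step 1: the span.} Induct on $j$ to show that $g_{t-m+j}=\phi_j(A)h$ with $\phi_j(\xi)=\prod_{s=t-m}^{t-m+j-1}(1-\alpha_s\xi)$. Each $\phi_j$ has degree exactly $j$, since its leading coefficient $\prod(-\alpha_s)$ is nonzero because the step sizes are nonzero. The polynomials $\phi_0,\dots,\phi_{m-1}$ therefore form a basis of polynomials of degree at most $m-1$. It follows that $\spann H_t=\{\psi(A)h:\deg\psi\le m-1\}=\K_m(A,h)$, and the rank hypothesis gives this space dimension $m$. Equivalently, since $u_i>0$ for all $q>m$ indices, the Vandermonde argument shows directly that $\K_m(A,h)$ has dimension $m$.

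\emph{Step 2: reduction to an inner product.} Let $Q$ be an orthonormal basis of $\K:=\K_m(A,h)$ and set $B_t=Q^TAQ$. By \eqref{eq:small} this is the matrix of the compression $P_\K A|_\K$, where $P_\K$ is the orthogonal projector. On the space of polynomials of degree at most $m$, define the discrete inner product $\langle\phi,\psi\rangle=h^T\phi(A)\psi(A)h=\sum_i u_i\phi(\lambda_i)\psi(\lambda_i)$, using the fact that the $E_i$ are orthogonal spectral projectors. This form is positive definite on polynomials of degree below $q$, because $u_i>0$ and the $\lambda_i$ are distinct. In the basis $\psi\mapsto\psi(A)h$, the operator $P_\K A|_\K$ corresponds to $\psi\mapsto$ the orthogonal projection of $\xi\psi$ onto polynomials of degree at most $m-1$.

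\emph{Step 3: identifying the characteristic polynomial.} Let $\pi_m$ be the monic degree-$m$ orthogonal polynomial, which exists and is unique by Gram--Schmidt applied to $\xi^m$ against the span of $1,\dots,\xi^{m-1}$. Condition \eqref{eq:orthogonal} is exactly $\langle\pi,\xi^a\rangle=0$ for $a<m$, so uniqueness of $\pi_m$ is immediate: the difference of two such monic polynomials has degree below $m$ and is orthogonal to itself, hence is zero because $m<q$. It remains to show $\det(\xi I-B_t)=\pi_m$. Write $T$ for the compressed multiplication operator. In the monomial basis $1,\xi,\dots,\xi^{m-1}$, $T$ sends $\xi^a\mapsto\xi^{a+1}$ for $a<m-1$, and sends $\xi^{m-1}$ to the projection of $\xi^m$, which equals $\xi^m-\pi_m(\xi)$. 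This is a companion matrix whose characteristic polynomial is $\pi_m$. An alternative route is to check that $\pi_m(T)1=\pi_m-\pi_m=0$ (modulo the projection), so that $1$ is a cyclic vector annihilated by $\pi_m$. This identification is the main step, and the companion-matrix computation is the cleanest way to carry it out.

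\emph{Step 4: location of the roots.} Use the standard argument. Suppose $\pi_m$ had fewer than $m$ sign changes at simple roots in $(\lambda_1,\lambda_q)$. Let $r$ be the product of $(\xi-z)$ over those roots. Then $\deg r<m$, so $\langle\pi_m,r\rangle=0$. But $\pi_m r$ does not change sign on $[\lambda_1,\lambda_q]$ and is not zero at every $\lambda_i$, since it has at most $2m-1<2q$ roots there. This gives a sum $\sum_i u_i\pi_m(\lambda_i)r(\lambda_i)\neq0$, a contradiction.

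The delicate point is strictness. To handle it, include in $r$ the sign changes in the closed interval. A root at $\lambda_1$ or $\lambda_q$ is excluded because the same argument, applied to roots in the open interval, already yields $m$ distinct sign changes there. Alternatively, one can note that the eigenvalues of $Q^TAQ$ lie in $[\lambda_1,\lambda_q]$ by the Rayleigh quotient, and that equality would force the corresponding Ritz vector into a single eigenspace, which is impossible since $u_i>0$ for at least $q\ge m+1\ge2$ indices.
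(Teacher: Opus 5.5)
Your proposal is correct and follows the paper's proof essentially step for step. You use the triangular change of basis to $\K_m(A,h)$, identify the compression with multiplication by $\xi$ in $\langle r,s\rangle_u=\sum_i u_i r(\lambda_i)s(\lambda_i)$ on polynomials of degree at most $m-1$, and apply the sign-change argument for the roots; your companion-matrix computation spells out a step the paper only asserts.

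Step 4 needs two small repairs. First, the product $\pi_m r$ cannot vanish at every node because its distinct zeros are exactly those of $\pi_m$, at most $m<q$ of them. The comparison $2m-1<2q$ does not by itself rule this out. Second, $r$ should run over all sign changes of $\pi_m$ in $(\lambda_1,\lambda_q)$, that is, its odd-multiplicity roots, not only its simple ones.
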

\begin{proof}
The $j$th historical gradient equals a polynomial of degree $j$ in $A$
applied to $h$, for $j=0,\ldots,m-1$. Its leading coefficient is the
nonzero product of the preceding step sizes, with sign $(-1)^j$.
The change of basis from these gradients to
$h,Ah,\ldots,A^{m-1}h$ is triangular and invertible.
The Krylov compression is the multiplication operator for polynomials
of degree at most $m-1$ in the inner product
$\langle r,s\rangle_u=\sum_i u_i r(\lambda_i)s(\lambda_i)$.
Its characteristic polynomial is the monic orthogonal polynomial in
\eqref{eq:orthogonal}. Positivity of the measure on $q>m$ distinct
points gives positive definite moment matrices and the stated root
properties. These also follow from the usual sign-change argument:
fewer than $m$ simple sign changes inside the support would produce a
polynomial of degree less than $m$ whose product with $\pi$ has one
sign on the support, contradicting orthogonality.
\end{proof}

Write the normalized filter as
\begin{equation}\label{eq:filter}
 p(\xi)=\frac{\pi(\xi)}{\pi(0)}
     =\prod_{j=1}^m\left(1-\frac{\xi}{\theta_j}\right),
\end{equation}
where $\theta_j$ are the Ritz values. One complete sweep of their
reciprocals multiplies the gradient by $p(A)$. A filter is
nonannihilating on the active support if $p(\lambda_i)\ne0$ for
every active index $i$; the same terminology for a single step means
$1-\alpha\lambda_i\ne0$ for every such $i$.

\subsection{Rolling cyclic Ritz reuse: the specified LMSDR rule}\label{sec:RCR}
Choose integers $m\ge1$, $\ell\ge1$, and initialize by $m$ actual
updates with one prescribed $\tau>0$. Assume $\tau\lambda_i\ne1$ and set
$d(t)=(1-\tau t)^m$. At times $t_k=m+km\ell$, extract the $m$ nodes
from \eqref{eq:history}. Freeze them and use every reciprocal exactly
$\ell$ times in $m\ell$ updates, with the last $m$ updates containing
every reciprocal exactly once. The preceding $m(\ell-1)$ updates may
be ordered arbitrarily. Repeating a complete sweep $\ell$ times is
one permitted ordering, but is not required. Then refresh from the
last $m$ consecutive gradients.
There is no early refresh or cycle truncation. This is a specified
member of the LMSDR reuse family of~\cite{ZouMagoules}; its
complete-sweep tail fixes the next historical anchor. Grouping all
copies of each step together, or interrupting the cycle through a
safeguard, can produce a different next history and is not covered
by the recurrence below.

Let
\begin{equation}\label{eq:anchors}
 h_k=g_{t_k-m}=g_{km\ell},\qquad G_k=g_{t_k},
 \qquad u_{i,k}=\|E_i h_k\|^2.
\end{equation}
Let $\theta_{j,k}$, $j=1,\ldots,m$, be the Ritz values extracted
from $H_{t_k}$. The cycle-$k$ polynomials are explicitly
\[
 \pi_k(\xi)=\det(\xi I_m-B_{t_k}),\qquad
 p_k(\xi)=\frac{\pi_k(\xi)}{\pi_k(0)}
          =\prod_{j=1}^m\left(1-\frac{\xi}{\theta_{j,k}}\right).
\]
By Lemma~\ref{lem:history}, $\pi_k$ is the monic orthogonal polynomial
for the historical weights $u_{i,k}$. Thus the subscript $k$ labels
the newly extracted set, although its defining gradient is $h_k$,
not the receiving gradient $G_k$. The actual recurrences are
\begin{align}
 G_0&=d(A)h_0,& G_k&=p_{k-1}(A)h_k\quad(k\ge1),\label{eq:G}\\
 h_1&=d(A)p_0(A)^{\ell-1}h_0,&
 h_{k+1}&=p_k(A)^{\ell-1}p_{k-1}(A)h_k\quad(k\ge1).
 \label{eq:B}
\end{align}
Indeed, the prefix preceding $h_{k+1}$ contains each node $\ell-1$
times, hence has filter $p_k(A)^{\ell-1}$, and $G_k$ follows the
final complete sweep of the preceding cycle. Permuting the prefix
does not change its endpoint. Permuting the final sweep changes the
historical basis but not its Krylov span, by Lemma~\ref{lem:history}.
Thus these permutations preserve the anchors, cycle endpoints, and
all subsequent Ritz sets. For $m=2$, $\ell=3$, the ordering
$1122\mid12$ is permitted; $111222$ is not covered by this argument.
This indexing includes $\ell=1$: then $h_{k+1}=G_k$.

For $k\ge1$, the time ordering is
\[
 h_k\xrightarrow{\ p_{k-1}(A)\ }G_k
 \xrightarrow{\ p_k(A)^{\ell-1}\ }h_{k+1}
 \xrightarrow{\ p_k(A)\ }G_{k+1}.
\]
Here $h_k$ defines the new Ritz set, while $G_k$ receives it. The
first link at initialization is $G_0=d(A)h_0$, as in \eqref{eq:G}.
For $\ell=1$ the middle link is the identity.

For $m=1$, \eqref{eq:small} yields
$\theta=g_{t-1}^TAg_{t-1}/\|g_{t-1}\|^2$. With
$s=x_t-x_{t-1}$ and $y=g_t-g_{t-1}$, its reciprocal equals
$s^Ts/(s^Ty)$. Thus $m=\ell=1$ is BB1, and $m=1$, $\ell>1$ is
the cyclic reuse of this historical BB1 step size~\cite{CBB}.
For $\ell=1$, each cycle is one complete sweep and the next extraction
uses its $m$ stored gradients. With the same initialization and ordering,
this is the full-rank, uninterrupted LMSD rule. These are parameter
specializations of the LMSDR realization above.

\subsection{Sampled cyclic Ritz reuse}\label{sec:SCR}
The modification replaces the rolling extraction window by a fresh
sampling history while retaining repeated use of the extracted nodes.
Here $F_k=g_{km(\ell+1)}$ is the cycle-start gradient, distinct
from the RCR anchor $h_k$. Perform $m$ prescribed positive updates
$\tau_{k,0},\ldots,\tau_{k,m-1}$. Use the historical gradients generated
by these updates to extract all $m$ Ritz values. From the sampling
endpoint, use each reciprocal $\ell$ times. Then start the next cycle
with another sampling block. If
\begin{equation}\label{eq:dk}
 d_k(t)=\prod_{j=0}^{m-1}(1-\tau_{k,j}t),
\end{equation}
then its exact endpoint relation is
\begin{equation}\label{eq:A}
 F_{k+1}=d_k(A)p_k(A)^\ell F_k,
\end{equation}
Here $p_k$ again means the normalized filter in \eqref{eq:filter},
but its nodes are extracted at time $km(\ell+1)+m$ from the fresh
sampling history. Its monic polynomial is orthogonal for the weights
$\|E_iF_k\|^2$, since $F_k$ is the first gradient in that history.
The notation $p_k$ is used separately within each algorithm.
The sampling updates have already changed the iterate and are included
in \eqref{eq:A}. They are not discarded or treated as free evaluations.
Section~\ref{sec:A} first allows prescribed steps to vary within
and between sampling blocks. Section~\ref{sec:adaptiveA} then permits
controlled history-dependent selection from a finite menu.

For $m=1$ and cycle-start gradient $F$, the sampling step gives
$s=-\tau F$, $y=-\tau AF$, and
$\alpha_{\rm BB1}=\frac{s^Ts}{s^Ty}=\frac{F^TF}{F^TAF}.$
The extracted BB1 step is defined by $F$, whereas rolling cyclic BB1
uses the historical anchor preceding the cycle start. The endpoint is
\begin{equation}\label{eq:scalarSamplingInterpretation}
 F_+=(I-\alpha_{\rm BB1}A)^\ell(I-\tau A)F
     =(I-\tau A)(I-\alpha_{\rm BB1}A)^\ell F.
\end{equation}
The extra sampling update changes the iterate and is counted
in each comparison.

\subsection{Rank and transfer to actual gradient updates}

\begin{remark}[Rank and finite termination]\label{rem:rank}
If the effective active Krylov space has dimension $r\le m$ and is
invariant under $A$, its $r$ Ritz values are all active eigenvalues.
Using their reciprocals once annihilates the gradient. An implementation
may therefore reduce the memory to the exact rank and apply this rule.
Rank $m$ alone does not imply invariance. Exact rank-compressed
complete sweeps are analyzed globally in~\cite{YangYuanLMSD}; this
algebraic convention is distinct from floating-point rank truncation.
The nontermination regimes
proved below retain positive spectral weights and rank $m$. The
almost-everywhere theorem excludes finite spectral-annihilation sets,
so no unproved rank convention is needed for those trajectories.
\end{remark}

\begin{lemma}[Transfer to actual updates]\label{lem:transfer}
Set $M=\max\{1,\lambda_q/\lambda_1-1\}$. Every Ritz update then
satisfies $\|g_{t+1}\|\le M\|g_t\|$.
Let $L\ge1$ be fixed and $a_k=g_{t_0+kL}$. Suppose each interval
between consecutive $a_k$ consists of $L$ Ritz updates. Then every
intervening gradient satisfies
\begin{equation}\label{eq:bracket}
 M^{-L}\|a_{k+1}\|\le\|g_t\|\le M^L\|a_k\|.
\end{equation}
Consequently, if $C\ge c>0$, $\sigma>1$, and
$\exp(-C\sigma^k)\le\|a_k\|\le\exp(-c\sigma^k)$ for all
sufficiently large $k$, then
\begin{equation}\label{eq:rate}
 \lim_{t\to\infty}\frac{\log(-\log\|g_t\|)}t
 =\frac{\log\sigma}{L}.
\end{equation}
\end{lemma}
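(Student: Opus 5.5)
The plan has three steps: bound one Ritz update, propagate that bound across a window, and then convert the cycle-level envelope into a rate for every gradient.

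\emph{Single-update bound.} A Ritz update uses a step $\alpha=1/\theta$, where $\theta$ is a Ritz value of $B_t=Q^TAQ$. Since $Q$ has orthonormal columns, $\theta$ lies in $[\lambda_1,\lambda_q]$; by Lemma~\ref{lem:history} it even lies in the open interval $(\lambda_1,\lambda_q)$ on the active subspace. Then $g_{t+1}=(I-A/\theta)g_t$, and because the $E_i$ are orthogonal,
\[
\|g_{t+1}\|\le\max_i|1-\lambda_i/\theta|\,\|g_t\|.
\]
For $\lambda_i\le\theta$ the factor is at most $1-\lambda_1/\lambda_q<1$. For $\lambda_i>\theta$ it is at most $\lambda_q/\theta-1\le\lambda_q/\lambda_1-1$. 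Hence every factor is at most $M$. If the paper's update counting includes sampling updates, those are not Ritz updates, so the transfer bound is applied only to windows made of Ritz updates, as the hypothesis states.

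\emph{Window bracket.} Let $g_t$ be a gradient with $t_0+kL\le t\le t_0+(k+1)L$. Applying the single-update bound at most $L$ times forward from $a_k$ gives $\|g_t\|\le M^{L}\|a_k\|$, using $M\ge1$ so that fewer steps only improve the bound. Applying it from $g_t$ forward to $a_{k+1}$ gives $\|a_{k+1}\|\le M^{L}\|g_t\|$. Together these give \eqref{eq:bracket}.

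\emph{Rate.} Write $t=t_0+kL+r$ with $0\le r<L$. The bracket combined with the envelope gives
\[
c\sigma^k-L\log M\;\le\;-\log\|g_t\|\;\le\;C\sigma^{k+1}+L\log M.
\]
For large $k$ both sides are positive. Taking logarithms yields $\log(-\log\|g_t\|)=k\log\sigma+O(1)$, uniformly in $r$. Since $k=(t-t_0-r)/L$, dividing by $t$ and letting $t\to\infty$ gives the limit $\log\sigma/L$, which is \eqref{eq:rate}. The one point needing care is that the additive constants $L\log M$ are dominated by $c\sigma^k$, which grows. This requires $\sigma>1$ and $c>0$, both assumed. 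I expect no other obstacle.
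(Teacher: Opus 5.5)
Your proposal is correct and follows the paper's proof essentially step for step. It bounds each factor by $|1-\lambda_i/\theta|\le M$ using $\theta\in[\lambda_1,\lambda_q]$, propagates forward from $a_k$ and from $g_t$ to $a_{k+1}$ with no inverse-update bound, and takes two logarithms for the rate, making explicit the single-step case split and the uniform-in-$r$ $O(1)$ bookkeeping that the paper leaves implicit.
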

\begin{proof}
All nodes lie in $[\lambda_1,\lambda_q]$, so
$|1-\lambda_i/\theta_j|\le M$. The upper bound follows by forward
propagation from $a_k$, and the lower bound by forward propagation from
$g_t$ to $a_{k+1}$. No bound on an inverse update is used.
Taking two logarithms in these inequalities proves \eqref{eq:rate},
with any fixed initial time offset absorbed into the limit.
\end{proof}

\section{Exact delayed convergence when \texorpdfstring{$q=m+1$}{q=m+1}}\label{sec:codimone}

We begin the passage to an actual superlinear theorem in the exactly
solvable case $q=m+1$, retaining the full historical delay. The
resulting recurrence determines an explicit fixed-count rate and
its exceptional initial weights. Its analysis must connect growth
of the complete logarithmic state to decay of every original
gradient component.
The case $m=\ell=1$, $q=2$ specializes to the classical planar BB1
dynamics~\cite{BB,Dai2013}. The analysis below retains the same
historical offset for multiple Ritz values and cyclic reuse.

Throughout this section $q=m+1$ and the initial support is full.
The barycentric coefficients $a_i$, Lagrange basis polynomials $L_i$,
and positive interpolation constants $b_i$ are
\begin{equation}\label{eq:barycentric}
 a_i=\frac1{\prod_{j\ne i}(\lambda_i-\lambda_j)},\qquad
 L_i(t)=\prod_{j\ne i}\frac{t-\lambda_j}{\lambda_i-\lambda_j},
 \qquad b_i=|L_i(0)|>0.
\end{equation}
For the historical weights $u_{i,k}=\|E_i h_k\|^2$, orthogonality makes
$p_k(\lambda_i)$ proportional to $a_i/u_{i,k}$. Centering at
$u_i\propto|a_i|$ removes these fixed barycentric factors from the
absolute filter ratios. Fix a reference index $r\in\{1,\ldots,q\}$
and define
\begin{equation}\label{eq:w}
 w_{i,k}=\log\frac{u_{i,k}}{u_{r,k}}-\log\left|\frac{a_i}{a_r}\right|,
 \qquad w_{r,k}=0.
\end{equation}
Write $w_k=(w_{i,k})_{i\ne r}\in\R^{q-1}$; when sums include the
reference index, use the convention $w_{r,k}=0$.

\begin{lemma}[Exact logarithmic dynamics]\label{lem:exact}
For \algRCR,
\begin{align}
 p_k(\lambda_i)&=\frac{(-1)^{i-1}e^{-w_{i,k}}}{B(w_k)},
 &B(w)&=\sum_j b_j e^{-w_j},\label{eq:plicit}\\
 w_{k+1}&=(3-2\ell)w_k-2w_{k-1}.\label{eq:wrec}
\end{align}
The first update is included by setting
\begin{equation}\label{eq:virtual}
 w_{i,-1}=-\log\left|\frac{d(\lambda_i)}{d(\lambda_r)}\right|.
\end{equation}
The index $-1$ is a virtual state encoding the prescribed
initialization; no gradient at a negative update index is required.
All filters at all active spectral points are nonzero.
\end{lemma}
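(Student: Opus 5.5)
The plan is to exploit the fact that when $q=m+1$ the orthogonality conditions \eqref{eq:orthogonal} determine the vector $(u_i\pi(\lambda_i))_i$ up to a scalar. I will do this first and then pass to logarithms. Fix a cycle $k$ and suppose inductively that all historical weights $u_{i,k}$ are positive and the history has rank $m$. Then Lemma~\ref{lem:history} applies to $\pi_k$. The conditions \eqref{eq:orthogonal} say that $v_i:=u_{i,k}\pi_k(\lambda_i)$ annihilates every polynomial of degree at most $m-1=q-2$ on the $q$ nodes. The space of such functionals is one-dimensional, and it is spanned by the divided-difference functional with coefficients $a_i$ from \eqref{eq:barycentric}. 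Hence $\pi_k(\lambda_i)=c\,a_i/u_{i,k}$ for some scalar $c\ne0$, and $c\ne0$ because $\pi_k$ is monic of degree $m<q$.

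Next I normalize. Since $\deg p_k=m=q-1$, Lagrange interpolation on the $q$ nodes is exact, so $p_k(t)=\sum_i p_k(\lambda_i)L_i(t)$. Evaluating at $0$ gives $1=c'\sum_i a_iL_i(0)/u_{i,k}$ for another scalar $c'$. A sign count shows that $a_i$ has sign $(-1)^{q-i}$ and $L_i(0)$ has sign $(-1)^{i-1}$. Thus $a_iL_i(0)=(-1)^{q-1}|a_i|b_i$ carries an $i$-independent sign. Substituting $|a_i|/u_{i,k}=(|a_r|/u_{r,k})e^{-w_{i,k}}$ from \eqref{eq:w}, the normalization becomes $1=\pm c''B(w_k)$. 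It follows that $p_k(\lambda_i)=\epsilon\,\mathrm{sgn}(a_i)e^{-w_{i,k}}/B(w_k)$ for a global sign $\epsilon$. The global sign is then fixed by $\sum_i p_k(\lambda_i)L_i(0)=1$ with all terms $b_je^{-w_j}$ positive, and this yields the alternating sign $(-1)^{i-1}$ in \eqref{eq:plicit}. The sign bookkeeping in this step is the main thing to check carefully. It is the only place I expect the argument to be delicate.

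For \eqref{eq:wrec}, I take squared projection norms in \eqref{eq:B}. For $k\ge1$,
$u_{i,k+1}=p_k(\lambda_i)^{2(\ell-1)}p_{k-1}(\lambda_i)^2u_{i,k}$. By \eqref{eq:plicit}, $\log|p_k(\lambda_i)/p_k(\lambda_r)|=-w_{i,k}$, because the denominators $B$ and the barycentric centering cancel. Taking the log-ratio with index $r$ therefore gives
$w_{i,k+1}=w_{i,k}-2(\ell-1)w_{i,k}-2w_{i,k-1}$, which is \eqref{eq:wrec}. For $k=0$, \eqref{eq:B} has $d(A)$ in place of $p_{-1}(A)$. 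The same computation holds if $-2w_{i,-1}$ is replaced by $2\log|d(\lambda_i)/d(\lambda_r)|$, which is exactly the definition \eqref{eq:virtual}.

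Finally I close the induction on nonvanishing and rank. The weights $u_{i,0}>0$ by full support. Each $d(\lambda_i)\ne0$ since $\tau\lambda_i\ne1$. Each $p_k(\lambda_i)$ is a finite nonzero multiple of an exponential by \eqref{eq:plicit}. Hence every $u_{i,k}>0$, and all filters are nonzero at all active points. Positive weights on $q>m$ points make $\K_m(A,h_k)$ $m$-dimensional. The step sizes used are reciprocals of Ritz values in $(\lambda_1,\lambda_q)$, so they are positive. Lemma~\ref{lem:history} then gives rank $m$ for the next history. This justifies applying the first two steps at every cycle.
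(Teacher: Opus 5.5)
Your proposal is correct and follows essentially the same route as the paper: the one-dimensional null space of the $m\times(m+1)$ Vandermonde system gives $\pi_k(\lambda_i)\propto a_i/u_{i,k}$, interpolation at zero with the $i$-independent sign of $a_iL_i(0)$ yields \eqref{eq:plicit}, and squared log-ratios in \eqref{eq:B} give \eqref{eq:wrec}, with the virtual state absorbing $d$. The differences are cosmetic: you normalize directly through $p_k(0)=1$ instead of first fixing the monic constant $1/\sum_j a_j^2/u_{j,k}$, and you make the induction on positivity and rank explicit, which rests on the triangular change of basis in the proof of Lemma~\ref{lem:history} rather than on its rank-assuming statement.
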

\begin{proof}
The null space of the $m\times(m+1)$ Vandermonde matrix in
\eqref{eq:orthogonal} is generated by $(a_i)_i$. Monicity fixes the
constant and gives
\[
 \pi_k(\lambda_i)=\frac{a_i}{u_{i,k}\sum_j a_j^2/u_{j,k}}.
\]
Interpolating at zero yields
$\pi_k(0)=\sum_i L_i(0)\pi_k(\lambda_i)$.
All products $L_i(0)a_i$ have the same sign. Substituting
\eqref{eq:w} proves \eqref{eq:plicit}; the sign follows from the ordered
positive nodes. In particular, no component vanishes.
The ratio of the absolute filter values at $i$ and $r$ is $e^{-w_{i,k}}$.
Taking squared-amplitude ratios in \eqref{eq:B} proves
\eqref{eq:wrec}, including its first step with \eqref{eq:virtual}.
\end{proof}

The spectral radius of the scalar second-order recurrence is
\begin{equation}\label{eq:sigma}
 \sig=\begin{cases}
 \sqrt2,&\ell=1,2,\\
 2,&\ell=3,\\
 \dfrac{2\ell-3+\sqrt{(2\ell-3)^2-8}}2,&\ell\ge4.
 \end{cases}
\end{equation}
For $\ell\ge3$, the subdominant characteristic root is
$\mu_s=-2/\sig$; thus $\mu_s=-1$ for $\ell=3$.

\begin{theorem}[Exact rate and exceptional initial weights]\label{thm:B1}
Let $q=m+1$ and initialize \algRCR\ as in Section~\ref{sec:algorithms}.
For $\ell=1,2$, exclude only the state $w_0=w_{-1}=0$. For $\ell\ge3$,
exclude the initial weights
\begin{equation}\label{eq:exception}
 \frac{u_{i,0}}{u_{r,0}}
 =\left|\frac{a_i}{a_r}\right|
  \left|\frac{d(\lambda_i)}{d(\lambda_r)}\right|^{-\mu_s}
 \quad(i\ne r).
\end{equation}
Every other full-support initial point satisfies, for sufficiently large $k$,
\begin{equation}\label{eq:B1envelope}
 e^{-C\sig^k}\le\|h_k\|\le e^{-c\sig^k},\qquad C\ge c>0,
\end{equation}
and the complete actual gradient sequence satisfies
\begin{equation}\label{eq:B1rate}
 \lim_{t\to\infty}\frac{\log(-\log\|g_t\|)}t
 =\frac{\log\sig}{m\ell}.
\end{equation}
The same limit holds with $\|x_t-x_*\|$ in place of $\|g_t\|$.
The exceptional initial points form a Lebesgue-null set.
\end{theorem}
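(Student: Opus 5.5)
The plan is to solve the linear recurrence \eqref{eq:wrec} of Lemma~\ref{lem:exact} explicitly, identify the exceptional set as the vanishing of the dominant mode, and then convert growth of the logarithmic state $w_k$ into componentwise decay of $h_k$ through the exact amplitude recursion \eqref{eq:B}. Each coordinate $w_{i,k}$ ($i\ne r$) satisfies the same scalar recurrence, with characteristic polynomial $\mu^2+(2\ell-3)\mu+2$.
\begin{itemize}
\item For $\ell=1,2$ the roots are $(\mp1\pm\mathrm{i}\sqrt7)/2$, a complex pair of modulus $\sqrt2$.
\item For $\ell=3$ the roots are $-1$ and $-2$.
\item For $\ell\ge4$ the roots are the negative reals $-\sig$ and $\mu_s=-2/\sig$, with $\sig>|\mu_s|$.
\end{itemize}
Thus $w_{i,k}=\kappa_i(-\sig)^k+\nu_i\mu_s^k$ in the real case, and $w_{i,k}=\operatorname{Re}(z_i\mu^k)$ in the complex case. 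The coefficients are determined by $(w_{i,0},w_{i,-1})$, where $w_{i,-1}$ is the virtual state \eqref{eq:virtual}.

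In the real case, $\kappa_i=0$ if and only if $w_{i,0}=\mu_s w_{i,-1}$. Substituting \eqref{eq:w} and \eqref{eq:virtual}, this is exactly the condition \eqref{eq:exception}. The exceptional set is therefore the set where $\kappa_i=0$ for all $i\ne r$. In the complex case, $z_i=0$ exactly when $w_{i,0}=w_{i,-1}=0$. Since $w_{-1}$ is fixed by $\tau$, this forces $w_{-1}=0$ as well, which is the single excluded state. Both exceptional sets prescribe all $q-1\ge1$ ratios $u_{i,0}/u_{r,0}$, i.e.\ they fix the spectral direction of $h_0$. This is a proper real-analytic (cone-type) subset of the active space, hence Lebesgue null, and its preimage under the fixed invertible initialization map $d(A)$ is null as well.

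Next I would compute the per-cycle losses. From \eqref{eq:B}, for $k\ge1$,
\[
\log u_{i,k+1}-\log u_{i,k}=2(\ell-1)\log|p_k(\lambda_i)|+2\log|p_{k-1}(\lambda_i)|.
\]
By \eqref{eq:plicit},
\[
\log|p_k(\lambda_i)|=-w_{i,k}-\log B(w_k),\qquad -\log B(w)=\min_j w_j+O(1),
\]
where the $O(1)$ depends only on the $b_j$ and $q$, and the minimum includes $w_{r}=0$. Hence
\[
\log|p_k(\lambda_i)|=-\bigl(w_{i,k}-\min_j w_{j,k}\bigr)+O(1)\le O(1),
\]
and $|\log|p_k(\lambda_i)||\le C\sig^k$. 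Summing the losses, and using $\sum_{s\le k}\sig^s\asymp\sig^k$, gives the lower bound $\|h_k\|\ge e^{-C\sig^k}$.

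The main obstacle is the upper bound $\|h_k\|\le e^{-c\sig^k}$. It requires that every component $i$ loses of order $\sig^k$ in each cycle. An index that is the minimizer at both $k-1$ and $k$ would lose nothing, so growth of $w$ alone is not enough. I would handle this with a pairwise argument.

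In the real case with $\ell\ge3$, fix $i$ and choose $j$ (possibly $j=r$) with $\kappa_j\ne\kappa_i$. Such a $j$ exists: if $\kappa_i=0$, some other $\kappa_j\ne0$ by nonexceptionality, and otherwise take $j=r$. Then $w_{i,s}-w_{j,s}=(\kappa_i-\kappa_j)(-\sig)^s+O(|\mu_s|^s)$ alternates in sign. Consequently, at one of $s\in\{k-1,k\}$,
\[
w_{i,s}-\min_{j'} w_{j',s}\ge w_{i,s}-w_{j,s}\ge c|\kappa_i-\kappa_j|\sig^{k}.
\]
Both filters in the recursion carry positive weights $2(\ell-1)$ and $2$. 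For $\ell=1$ the recursion gives $h_{k+1}=G_k$ and only the $p_{k-1}$ term appears, so I would instead use consecutive cycles $s=k-1,k$ across two steps. In every case the per-cycle or two-cycle loss is at least $c\sig^k$ minus an $O(1)$ term, and summing gives $\log\|h_k\|^2\le -c\sig^k$.

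In the complex case, $w_{i,s}-w_{j,s}=\operatorname{Re}\bigl((z_i-z_j)\mu^s\bigr)$, where consecutive phases differ by $\arg\mu\notin\{0,\pi\}$. This gives $\max(|\cos\psi|,|\cos(\psi+\arg\mu)|)\ge c_0>0$ uniformly in $\psi$. For $\ell=2$, the two weighted terms $2\log|p_k|$ and $2\log|p_{k-1}|$ carry the same sign, and the same argument applies. I would double-check that the $\ell=2$ combination $(\ell-1,1)$ leaves no cancellation.

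Finally, the envelope \eqref{eq:B1envelope} holds for $a_k=h_k=g_{km\ell}$. After the initialization, each interval between consecutive anchors consists of exactly $L=m\ell$ Ritz updates with nodes in $(\lambda_1,\lambda_q)$ by Lemma~\ref{lem:history}. Lemma~\ref{lem:transfer} then yields \eqref{eq:B1rate}. The statement for $\|x_t-x_*\|$ follows from the two-sided bound $\lambda_1\|x_t-x_*\|\le\|g_t\|\le\lambda_q\|x_t-x_*\|$, since taking two logarithms removes constant factors.
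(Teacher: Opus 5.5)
\textbf{Gap in the complex case $\ell=1,2$.} Your real-root case $\ell\ge3$ is sound. There the dominant mode makes $w_{i,s}-w_{j,s}$ alternate in sign with size of order $\sigma_\ell^s$, so one of the two filters in cycle $k$ removes order $\sigma_\ell^k$ from component $i$. Note that $\ell=1$ belongs to the complex case, not the real one, and that case fails as written.

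The loss of component $i$ at cycle $s$ is $w_{i,s}-\min_j w_{j,s}+O(1)$. Your pairwise comparison produces this loss only when $w_{i,s}-w_{j,s}$ is large and \emph{positive}. The bound $\max(|\cos\psi|,|\cos(\psi+\arg\mu)|)\ge c_0$ controls only absolute values, so it can just as well certify that $j$, not $i$, loses.

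Take $q=2$ with reference $r=2$. Then $w_{1,s}=|z_1|\,2^{s/2}\cos\psi_s$ with $\psi_{s+1}=\psi_s+\arg\mu$, and $\log|p_s(\lambda_1)|=-\max(w_{1,s},0)+O(1)$.
\begin{itemize}
\item For $\ell=2$, $\arg\mu\approx110.7^\circ$. With $\psi_{k-1}\approx95^\circ$, both $\cos\psi_{k-1}$ and $\cos\psi_k$ are negative, so component $1$ loses only $O(1)$ in cycle $k$ from both $p_{k-1}$ and $p_k$.
\item For $\ell=1$, $\arg\mu\approx69.3^\circ$, and two (indeed three) consecutive phases can have negative cosine. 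This defeats your two-step device.
\end{itemize}
Since $\arg\mu/\pi$ is irrational, such configurations recur infinitely often. So your claimed per-cycle or two-cycle loss $c\sigma_\ell^k-O(1)$ for every component is false when $\ell=1,2$.

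\textbf{Repair.} Use a longer window. Since $0<\arg\mu<\pi$, the phases advance by exactly $\arg\mu$ and cannot jump over the closed arc $\{\cos\ge\cos(\arg\mu/2)\}$, whose length is $\arg\mu$. Hence every block of $W=\lceil 2\pi/\arg\mu\rceil+1$ consecutive cycles contains some $s$ with $w_{i,s}-w_{j,s}\ge\cos(\arg\mu/2)\,|z_i-z_j|\,\sigma_\ell^s$. All other filter logarithms are at most $O(1)$, so summing block by block still gives $\log\|E_ih_N\|\le C-c\sigma_\ell^N$.

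\textbf{The paper's route.} The paper avoids sign and phase bookkeeping entirely. With $v_{i,k}=\log\sum_j e^{w_{i,k}-w_{j,k}}$ and $V_{r,N}=\sum_{k<N}v_{r,k}$, it writes
$w_{i,k+1}-w_{i,k}=2(\ell-1)(v_{r,k}-v_{i,k})+2(v_{r,k-1}-v_{i,k-1})$.
Discarding the nonpositive terms gives $-V_{r,N}\le w_{i,k}\le C+2\ell V_{r,N}$ for $k<N$. Two-sided growth of the complete state $(w_k,w_{k-1})$ then forces $V_{r,N}\ge c\sigma_\ell^N-C$. Changing the reference index gives the same bound for every component, uniformly for real and complex roots.

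The rest of your plan is fine: the lower envelope, the identification of the exceptional set, and the transfer to all updates. One minor correction: $h_0=g_0$, so the null-set step needs no preimage under $d(A)$.
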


\begin{proof}
\emph{Growth of the complete ratio state.} The characteristic equation of \eqref{eq:wrec} is
\begin{equation}\label{eq:characteristic}
 \mu^2+(2\ell-3)\mu+2=0.
\end{equation}
For $\ell=1,2$, the roots are a nonreal conjugate pair of modulus
$\sqrt2$. A fixed real change of coordinates makes the state map a
scaled rotation, so every nonzero complete state satisfies
\begin{equation}\label{eq:stategrowth}
 c_0\sig^k\le\|(w_k,w_{k-1})\|\le C_0\sig^k.
\end{equation}
For $\ell\ge3$, the roots are $-\sig$ and $\mu_s$; the dominant mode
vanishes precisely when $w_0=\mu_s w_{-1}$, which is
\eqref{eq:exception}. Otherwise \eqref{eq:stategrowth} again holds.
The estimate concerns the complete state; no uniform lower bound for
an individual oscillating coordinate $w_{i,k}$ is asserted.

\emph{Loss of every gradient component.} To recover radial decay, set
\[
 v_{i,k}=\log\sum_j e^{w_{i,k}-w_{j,k}}\ge0,
 \qquad V_{i,N}=\sum_{k=0}^{N-1}v_{i,k}.
\]
Equation~\eqref{eq:plicit} gives
\begin{equation}\label{eq:lossidentity}
 \log|p_k(\lambda_i)|=-v_{i,k}-\varepsilon_k,
 \quad \log\min_jb_j\le\varepsilon_k\le\log\max_jb_j.
\end{equation}
Changing the reference index acts on the complete state by a fixed
invertible linear map, so \eqref{eq:stategrowth} remains valid for every
reference $r$. For such a reference,
$w_{i,k}\ge-v_{r,k}\ge-V_{r,N}$ for $k<N$.
Moreover, because $v_{r,k}-v_{i,k}=-w_{i,k}$,
\[
 w_{i,k+1}-w_{i,k}
 =2(\ell-1)(v_{r,k}-v_{i,k})
   +2(v_{r,k-1}-v_{i,k-1})\quad(k\ge1).
\]
The first update has only a fixed extra initialization term. Discarding
the nonpositive terms and summing yields
$-V_{r,N}\le w_{i,k}\le C_r+2\ell V_{r,N}\quad(0\le k<N).$
Applying \eqref{eq:stategrowth} at $N-1$ proves
$V_{r,N}\ge c_r\sig^N-C_r$. The upper estimate follows from
$v_{i,k}\le\log q+\max_j(w_{i,k}-w_{j,k})$ and
\eqref{eq:stategrowth}. Thus, for every $i$,
\begin{equation}\label{eq:V}
 c_i\sig^N\le V_{i,N}\le C_i\sig^N
\end{equation}
eventually. Multiplying \eqref{eq:B} and using \eqref{eq:lossidentity},
$\log\|E_i h_N\| =C_i'-(\ell-1)V_{i,N}-V_{i,N-1}+O(N).$
Even for $\ell=1$, the $V_{i,N-1}$ term supplies the decay.
Equation~\eqref{eq:V} proves \eqref{eq:B1envelope};
Lemma~\ref{lem:transfer} proves \eqref{eq:B1rate}.
Since $g_t=A(x_t-x_*)$, fixed norm equivalence proves the error result.
The exceptional positive weight ratios are either absent or a single
point in projective weight space. Block polar coordinates in the
eigenspaces and Fubini's theorem lift this null set to the initial
gradient space and hence to the initial $x$ space.
\end{proof}

In Theorem~\ref{thm:B1}, the R-order is $\sigma_\ell$ per historical
anchor and $\sigma_\ell^{1/(m\ell)}$ per actual gradient update.
The next section asks
whether actual delayed acceleration persists when $q>m+1$, where
the orthogonality equations no longer give an explicit filter at
every spectral point.

\section{Open convergence basins for \texorpdfstring{$m+1<q\le2m$}{m+1<q<=2m}}\label{sec:two}

The two constructions in this section establish open basins across
the stated spectral range. We first treat two disjoint dominant sets
at $q=2m$, where the intergroup gap outgrows internal spread. We then
allow overlap for intermediate dimensions; the shared components
require coefficient control on the same scale as the gap.

\subsection{Separated dominant sets at \texorpdfstring{$q=2m$}{q=2m}}\label{sec:separated}

For $q=2m$, uniform group estimates replace the explicit filter
formula of Section~\ref{sec:codimone}. They retain the terms omitted
by a dominant-balance model and control their effect along the rolling
trajectory.

The proof of Theorem~\ref{thm:B2} compares the alternating intergroup
gap $D_k$ with the within-group spread $S_k$. The condition $\ell\ge4$
makes the gap scale $\sigma_\ell$ exceed the spread scale
$\sqrt{2\ell-1}$, so sufficient initial separation persists.
We then sum componentwise logarithmic losses to obtain decay of the
full gradient.

Let $q=2m$, $m\ge2$, and divide the ordered spectrum into
$I=\{1,\ldots,m\}$ and $J=\{m+1,\ldots,2m\}$.
Write
$P_{i,k}=\log\|E_i h_k\|,\qquad \bar P_{K,k}=m^{-1}\sum_{i\in K}P_{i,k}\quad(K=I,J).$
Define the initial gap and spread by
\begin{equation}\label{eq:DS}
 D_0=\bar P_{I,0}-\bar P_{J,0},\qquad
 S_0=\max_{K\in\{I,J\}}\max_{i\in K}|P_{i,0}-\bar P_{K,0}|.
\end{equation}

\begin{theorem}[An open basin of doubly exponential convergence]\label{thm:B2}
Fix the positive spectrum, $m\ge2$, $\ell\ge4$, and a nonannihilating
fixed-$\tau$ initialization. For each $s>0$, there is a finite $D_*(s)$
such that every initial point with
\begin{equation}\label{eq:openset}
 S_0<s,\qquad D_0>D_*(s)
\end{equation}
has a full-rank, nonannihilating trajectory of \algRCR\ satisfying
\eqref{eq:B1envelope} and \eqref{eq:B1rate}, with $\sig$ given by
\eqref{eq:sigma}. The set \eqref{eq:openset} is nonempty and open.
The same statement holds with $I$ and $J$ interchanged.
\end{theorem}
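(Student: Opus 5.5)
The plan is to replace the explicit formula \eqref{eq:plicit} of Lemma~\ref{lem:exact} by a \emph{uniform group filter estimate}, and then to run an induction along the rolling recurrence \eqref{eq:B}. The structural fact is that when one group dominates, the historical Ritz values sit close to that group's eigenvalues. Suppose the historical weights $u_i=\|E_ih\|^2$ satisfy $\bar P_I-\bar P_J=D$ with within-group spread at most $S$, and $D$ is large relative to $S$. Then, by orthogonality \eqref{eq:orthogonal} and a perturbation argument on the $m\times m$ Gram system, the $m$ Ritz values lie within $O(e^{-2D+cS})$ of $\lambda_1,\ldots,\lambda_m$. The lemma I would prove is:
\[
 \log|p(\lambda_i)|=-2(P_i-\bar P_J)+\beta_i+O(e^{-c(D-C S)})\quad(i\in I),\qquad
 \log|p(\lambda_j)|=\gamma_j+O(e^{-c(D-CS)})\quad(j\in J).
\]
Here $\beta_i,\gamma_j$ are fixed spectral constants, of barycentric type as in \eqref{eq:barycentric}. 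The $j\in J$ filter values are bounded and bounded away from $0$, since the Ritz values stay separated from $J$. The first formula is the analogue of $p(\lambda_i)\propto a_i/u_i$ from the $q=m+1$ case, with the $J$ block playing the role of the single extra node. To derive it, I would write $\pi$ as the perturbation of $\prod_{i\in I}(\xi-\lambda_i)$ that solves \eqref{eq:orthogonal}, and expand to leading order in $u_J/u_I$. The symmetric statement, with $I$ and $J$ interchanged, follows from the same computation.

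Next I would insert these estimates into \eqref{eq:B} and derive approximate recurrences for $D_k$ and $S_k$. Because $p_k$ depletes the dominant group, the sign of $D_k$ alternates, and at leading order $D_{k+1}=(3-2\ell)D_k-2D_{k-1}+O(1)+O(S_k)$. This is the scalar recurrence \eqref{eq:wrec}, whose dominant root is $-\sig$. The within-group deviations $P_{i,k}-\bar P_{K,k}$ then obey a linear recurrence from the $-2P_i$ terms, with characteristic roots of modulus at most about $\sqrt{2\ell-1}$.

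For $\ell\ge4$ one has $\sig>\sqrt{2\ell-1}$. I would therefore run a bootstrap induction with hypotheses
\[
 |D_k|\ge c\,\sig^k,\qquad S_k\le C(s)\,\rho^k+C,\qquad \sqrt{2\ell-1}<\rho<\sig,
\]
together with an alignment condition that $(D_k,D_{k-1})$ is close to the dominant eigendirection. The first two cycles use the fixed-$\tau$ initialization, which is nonannihilating and contributes only a bounded perturbation; this plays the role of the virtual state \eqref{eq:virtual}. Choosing $D_*(s)$ large makes all error terms $O(e^{-c(D_k-CS_k)})$ summable. It also keeps the subdominant part of $(D_k,D_{k-1})$ from overtaking the dominant part, which closes the induction. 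Full rank and nonannihilation come for free: all weights stay positive and every filter value is nonzero along the trajectory.

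Decay of the actual gradient then follows the loss argument in the proof of Theorem~\ref{thm:B1}. Each component's cumulative logarithmic loss $\sum_k\log|p_k(\lambda_i)|^{-1}$ grows like $\sig^N$. For $i$ in the currently dominant group the loss is about $2|D_k|$ per cycle. On alternate cycles the loss is $O(1)$, but because of the alternation every group is dominant infinitely often with growing gap. This yields \eqref{eq:B1envelope}, and Lemma~\ref{lem:transfer} with $L=m\ell$ gives \eqref{eq:B1rate}. The set \eqref{eq:openset} is open because $D_0$ and $S_0$ are continuous functions of $g_0$ on the full-support set. It is nonempty because one can choose the block norms freely.

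The main obstacle is the filter lemma's cancellation sensitivity in the dominant group. The quantities $p(\lambda_i)$, $i\in I$, are exponentially small differences, so the error must be controlled \emph{relatively}, in logarithmic form, uniformly in $S$, rather than absolutely. My first attempt would be to solve the Gram system exactly with $J$-weights treated as a perturbation. I would use a Schur-complement formula and bound the relative error by $e^{-2D+CS}$ via Cramer's rule with Vandermonde factors, before moving on to the bootstrap.
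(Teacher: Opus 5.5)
Your overall architecture is the paper's. It has a uniform two-group filter estimate and a forced gap recurrence with $a=2\ell-3$. The within-group spread grows at rate $\nu=\sqrt{2\ell-1}<\sig$, a first-failure bootstrap closes the induction, and summed componentwise losses plus Lemma~\ref{lem:transfer} finish the proof.

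\textbf{The gap is in the filter lemma itself, which is false once $|J|=m\ge2$.} The $J$ block does not act like the single extra node of the case $q=m+1$. In the subset expansion \eqref{eq:subset}, the leading numerator at $\lambda_i$, $i\in I$, is $u_{I\setminus\{i\}}\sum_{j\in J}C_{ij}u_j$, with coefficients depending on both $i$ and $j$. Hence
\[
 \log|p(\lambda_i)|=-2P_i+\log\sum_{j\in J}|C_{ij}|e^{2P_j}+c_i+O(e^{-2\Gamma}),
\]
where $c_i$ depends only on the spectrum. The middle term is not $2\bar P_J$ plus a fixed constant. Hold $\bar P_J$ fixed and move $P_{m+1}$ up and $P_{m+2}$ down by $s$. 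The middle term then changes by roughly $2s$ for large $s$, while your error $O(e^{-c(D-CS)})$ tends to zero as $D\to\infty$.

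The correct weight-uniform statement is Lemma~\ref{lem:twobound}: $\log|p(\lambda_i)|=2(\max_JP-P_i)+O(1)$, which implies $2(\bar P_J-P_i)+O(S+1)$. Its lower bound needs an ingredient your plan never identifies. For fixed $i$, all $C_{ij}$, $j\in J$, have one sign, because $J$ lies entirely on one side of $I$ in the spectrum. So the leading $J$-sum cannot cancel. A fixed constant $\beta_i$ silently assumes this non-cancellation, and a Schur-complement/Cramer computation aimed at your formula is aimed at the wrong leading term. The paper's use of \eqref{eq:subset} also removes the near-cancellation worry you raise: $\pi(\lambda_i)$ is a sum over subsets omitting $i$, so no nearly equal quantities are subtracted.

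\textbf{Consequence for the bootstrap.} The errors in the gap recurrence are neither exponentially small nor summable. They are $O(S_k+S_{k-1}+1)=O(\nu^k)$ and grow. They are absorbed only by the convolution bound \eqref{eq:commonConvolution}, which uses $\nu<\sig$. Your displayed recurrence already carries an $O(S_k)$ term. After you correct the lemma and add the $S_{k-1}$ term, the bootstrap closes as in the paper. You should drop the claim that a large $D_*(s)$ makes the errors summable.

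The alignment hypothesis is unnecessary. The actual first transition is the recurrence with virtual state $D_{-1}=0$. After the sign change $D_k\mapsto(-1)^kD_k$, the fundamental solution satisfies $\varphi_k\ge\sig^k$.
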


\subsubsection{Uniform filter estimates}

For a generic defining gradient $h$, set $u_i=\|E_i h\|^2$.
For a subset $S\subseteq\{1,\ldots,q\}$, use
$u_S=\prod_{i\in S}u_i$ and
$\Delta_S=\prod_{i<j,\ i,j\in S}(\lambda_j-\lambda_i)$,
with every empty product equal to one. The monic orthogonal polynomial has the finite subset expansion
\begin{equation}\label{eq:subset}
 \pi(t)=\frac{\displaystyle\sum_{|S|=m}
 u_S\Delta_S^2\prod_{j\in S}(t-\lambda_j)}
 {\displaystyle Z_m(u)},\qquad
 Z_m(u)=\sum_{|S|=m}u_S\Delta_S^2.
\end{equation}
For completeness, monicity is immediate. For orthogonality, interchange
the sums over $i$ and $S$ in $\sum_i u_i\lambda_i^a\pi(\lambda_i)$.
Group the terms by the $(m+1)$-element set $T=S\cup\{i\}$.
Their sum is proportional to
$\sum_{i\in T}\lambda_i^a/\prod_{j\in T\setminus\{i\}}
(\lambda_i-\lambda_j)=0$ for $a<m$, by Lagrange interpolation.
Thus \eqref{eq:subset} follows from uniqueness in
\eqref{eq:orthogonal}. After normalization at zero, this is the
complete-sweep Cauchy--Binet representation in
\cite[Proposition~11]{YangYuanLMSD}. We use this shared algebraic
identity to derive the weight-uniform, cancellation-sensitive bound
needed for the superlinear delayed dynamics.

\begin{lemma}[Two-group filter bound]\label{lem:twobound}
There are constants $\Gamma_*>0$ and $C$ depending only on the fixed
spectrum and $m$ with the following property. If one group $K\in\{I,J\}$
dominates the other group $L$ in the sense that
\[
 \Gamma=\min_{i\in K}P_i-\max_{j\in L}P_j\ge\Gamma_*,
 \qquad P_i=\tfrac12\log u_i,
\]
then, with $M_L=\max_{j\in L}P_j$,
\begin{equation}\label{eq:filterbound}
 \log|p(\lambda_i)|=
 \begin{cases}2(M_L-P_i)+e_i,&i\in K,\\e_i,&i\in L,
 \end{cases}\qquad |e_i|\le C.
\end{equation}
Every $p(\lambda_i)$ is nonzero. The bound is independent of the
ratios of weights within either group.
\end{lemma}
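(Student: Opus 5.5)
The plan is to read both $\pi(\lambda_i)$ and $\pi(0)$ directly off the subset expansion \eqref{eq:subset}. We then show that in each case one identifiable family of subsets dominates, and that this family has no internal sign cancellation. The normalization $Z_m(u)$ cancels in $p(\lambda_i)=\pi(\lambda_i)/\pi(0)$. So it suffices to estimate the two numerators
\[
 N(t)=\sum_{|S|=m}u_S\Delta_S^2\prod_{j\in S}(t-\lambda_j)
\]
at $t=0$ and $t=\lambda_i$. Every subset $S\ne K$ with $|S|=m$ is obtained from $K$ by exchanging some $r\ge1$ indices of $K$ for indices of $L$. The dominance hypothesis then gives $u_S\le e^{-2r\Gamma}u_K$. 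More precisely, removing $k\in K$ and adding $j\in L$ multiplies $u_S$ by $u_j/u_k\le e^{-2\Gamma}$.

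\emph{Step 1: the value at zero.} Every term of $N(0)$ carries the sign $(-1)^m$, and every $\prod_{j\in S}\lambda_j$ lies between fixed positive constants. Hence $|N(0)|$ is comparable to $\sum_S u_S\Delta_S^2$. The term $S=K$ dominates, and the remaining terms sum to at most $\binom{2m}{m}e^{-2\Gamma}$ times $u_K$ up to spectral constants. Thus $|N(0)|=u_K\,e^{O(1)}$ once $\Gamma\ge\Gamma_*$.

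\emph{Step 2: $i\in L$.} The subset $S=K$ does not contain $i$. It contributes $u_K\Delta_K^2\prod_{j\in K}(\lambda_i-\lambda_j)$, which is nonzero and of size $u_K$ up to fixed constants. All other terms are again $O(e^{-2\Gamma}u_K)$. For $\Gamma_*$ large this gives $|N(\lambda_i)|=u_K\,e^{O(1)}$, hence $|p(\lambda_i)|=e^{O(1)}$ and $p(\lambda_i)\ne0$.

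\emph{Step 3: $i\in K$, the expected main obstacle.} Here every surviving $S$ must omit $i$, so the leading family is $S_j=(K\setminus\{i\})\cup\{j\}$ with $j\in L$. These terms are
\[
 \frac{u_K}{u_i}\,u_j\,\Delta_{S_j}^2\prod_{s\in K\setminus\{i\}}(\lambda_i-\lambda_s)\,(\lambda_i-\lambda_j).
\]
The product over $K\setminus\{i\}$ is common to all $j$. The crucial point is that $\lambda_i-\lambda_j$ has the same sign for all $j\in L$: the groups are ordered blocks, with $I$ below $J$. So this leading family cannot cancel, and its sum is $(u_K/u_i)\,e^{2M_L}e^{O(1)}$, since $\sum_{j\in L}u_j$ is comparable to $\max_j u_j=e^{2M_L}$.

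Every other admissible $S$ omits $i$ and at least one further index of $K$. Relative to some $S_j$, it therefore performs at least one extra exchange, so it is smaller by a factor $e^{-2\Gamma}$. There are finitely many such terms, so their total is $O(e^{-2\Gamma})$ relative to the leading sum. Choosing $\Gamma_*$ so that all these relative errors are at most $1/2$ gives
\[
 |p(\lambda_i)|=\frac{u_K e^{2M_L}}{u_i\,u_K}\,e^{O(1)}=e^{2(M_L-P_i)+O(1)}.
\]
This value is in particular nonzero. The constants depend only on $\binom{2m}{m}$, the Vandermonde factors $\Delta_S$, and the eigenvalue gaps, never on within-group ratios, which yields \eqref{eq:filterbound}.

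The delicate point is Step 3. A naive bound would have to rule out cancellation among the $m$ leading terms, which could otherwise collapse $p(\lambda_i)$ far below $e^{2(M_L-P_i)}$. The block ordering of $I$ and $J$ is exactly what supplies the common sign. I would verify this sign claim first, and make the exchange-counting bound $u_S\le e^{-2\Gamma}u_{S_j}$ explicit, before assembling the constants.
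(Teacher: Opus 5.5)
Your proposal is correct and follows essentially the same route as the paper. Both arguments use the subset expansion \eqref{eq:subset}, the dominance of the term $S=K$, the leading family $(K\setminus\{i\})\cup\{j\}$ whose coefficients share one sign because $L$ lies on one side of $K$, and an extra factor $e^{-2\Gamma}$ for every further exchange. The only cosmetic difference is that you normalize through the same-sign numerator $N(0)$ rather than the root bound $\lambda_1^m\le|\pi(0)|\le\lambda_{2m}^m$; this is the same estimate, since $\pi(0)=N(0)/Z_m(u)$ is a weighted average of the values $(-1)^m\prod_{j\in S}\lambda_j$.
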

\begin{proof}
Put $\epsilon=\max_Lu/\min_Ku=e^{-2\Gamma}$.
The denominator in \eqref{eq:subset} is
$u_K\Delta_K^2(1+O(\epsilon))$, because every other subset replaces
at least one weight of $K$ by a weight of $L$.
For $i\in K$, all numerator terms containing $i$ vanish. The leading
terms are $S=(K\setminus\{i\})\cup\{j\}$, $j\in L$, so the numerator
is
\[
 u_{K\setminus\{i\}}\left(\sum_{j\in L}C_{ij}u_j+R_i\right),
 \quad
 C_{ij}=\Delta_{(K\setminus\{i\})\cup\{j\}}^2
 (\lambda_i-\lambda_j)
 \prod_{r\in K\setminus\{i\}}(\lambda_i-\lambda_r).
\]
For each fixed $i$, all $C_{ij}$ have the same sign: all of $L$ lies
on one side of $K$ in the spectrum. Their finite sum is therefore
bounded above and below in absolute value by positive constants times
$\max_Lu$. Each remaining subset uses at least one further replacement,
and $|R_i|\le C_1\epsilon\max_Lu$. A sufficiently large $\Gamma_*$
gives
\[
 c_1\frac{\max_Lu}{u_i}\le|\pi(\lambda_i)|
 \le C_2\frac{\max_Lu}{u_i}\quad(i\in K).
\]
For $i\in L$, the subset $K$ has a nonzero leading coefficient and
$\pi(\lambda_i)=\prod_{r\in K}(\lambda_i-\lambda_r)+O(\epsilon)$.
Finally $\lambda_1^m\le|\pi(0)|\le\lambda_{2m}^m$ by the root bounds.
Normalization and logarithms prove the claim.
\end{proof}

\subsubsection{Forced recurrences and separation}\label{sec:forcedFramework}

Both constructions lead to a gap recurrence of the form
$D_{k+1}=aD_k-2D_{k-1}+E_k$. The scales below compare its homogeneous
growth ($\sigma$) with the within-group spread ($\nu$). Set
\begin{equation}\label{eq:scales}
 a=2\ell-3,\qquad B=2\ell-1=a+2,\qquad \nu=\sqrt B,
 \qquad\sigma=\sig,\qquad \varrho=2/\sigma.
\end{equation}
For $\ell\ge4$ the roots of $z^2-az+2$ obey
\begin{equation}\label{eq:separation}
 0<\varrho<1<\nu<\sigma.
\end{equation}
Indeed $a\ge5$, and the polynomial is negative at $\sqrt{a+2}$ since
$a\sqrt{a+2}>a+4$.

The fundamental solution
\[
 \varphi_k=\frac{\sigma^{k+1}-\varrho^{k+1}}{\sigma-\varrho}
 \quad\text{satisfies}\quad \varphi_0=1,\ \varphi_1=a,
 \quad\sigma^k\le \varphi_k\le C_f\sigma^k.
\]

Here $C_f=\sigma/(\sigma-\varrho)$. All constants in this estimate are
fixed once the repetition count is fixed.

If $X_{-1}=0$, $X_0$ is prescribed, and
$X_{k+1}=aX_k-2X_{k-1}+f_k$, variation of constants gives
\begin{equation}\label{eq:commonConvolution}
 X_k=X_0\varphi_k+\sum_{j=0}^{k-1}f_j\varphi_{k-1-j},\qquad
 |X_k-X_0\varphi_k|\le\frac{C_f E}{\sigma-\nu}\sigma^k
 \quad\text{if }|f_j|\le E\nu^j.
\end{equation}
The bound follows by summing
$\sum_{j<k}\nu^j\sigma^{k-1-j}\le\sigma^k/(\sigma-\nu)$.
Replacing a variable by $(-1)^k$ times that variable also covers
the sign-conjugate recurrence arising from common components.

These estimates may be applied on any finite segment on which the
filter bound holds. To close a basin, choose the initial gap so that
the resulting error bounds preserve strict ordering at the next
step. Applying this implication at a proposed first failure rules
it out. The initialized first transition must satisfy the same
bounds; it cannot be replaced by an independently chosen state.

Set $K_k=I$ for even $k$ and $K_k=J$ for odd $k$, and let $L_k$ be
the other group. Define
\[
\begin{gathered}
 T_k=\bar P_{K_k,k},\quad U_k=\bar P_{L_k,k},\quad D_k=T_k-U_k,\\
 \quad\delta_{i,k}=P_{i,k}-\bar P_{\text{group containing }i,k},
 \quad S_k=\max_i|\delta_{i,k}|.
\end{gathered}
\]
Also write $M_{L_k,k}=\max_{i\in L_k}P_{i,k}$ for the largest
log amplitude in the weak group at cycle $k$.
Define $\Gamma_k=\min_{i\in K_k}P_{i,k}-\max_{j\in L_k}P_{j,k}$.
Then $\Gamma_k\ge D_k-2S_k$.
For the moment consider any finite trajectory segment on which the
prescribed alternation and $\Gamma_k\ge\Gamma_*$ hold.
The true recurrence \eqref{eq:B} and Lemma~\ref{lem:twobound} give,
for $k\ge1$,
\begin{align}
 P_{i,k+1}&=-aP_{i,k}+(a+1)M_{L_k,k}+r_{i,k},&&i\in K_k,
 \label{eq:top}\\
 P_{i,k+1}&=P_{i,k}-2P_{i,k-1}+2M_{L_{k-1},k-1}+r_{i,k},
 &&i\in L_k,\label{eq:tail}
\end{align}
where $|r_{i,k}|\le K$ uniformly. The fixed initialization terms can
be bounded by enlarging $K$. Taking group means gives
\begin{align}
 D_{k+1}&=aD_k-2D_{k-1}+E_k,
 &|E_k|&\le(a+1)S_k+2S_{k-1}+2K,\label{eq:Drec}\\
 D_1&=aD_0+E_0,
 &|E_0|&\le(a+1)S_0+2K.\label{eq:Dinit}
\end{align}
In particular, the second initial state is imposed by the actual
initialization rather than chosen independently.

Subtracting group means in \eqref{eq:top} yields
$\delta_{i,k+1}=-a\delta_{i,k}+O(1)$ for $i\in K_k$.
For $i\in L_k$, combine this formula at $k-1$ with \eqref{eq:tail}
to obtain
\begin{equation}\label{eq:spread}
 \delta_{i,k+1}=-B\delta_{i,k-1}+O(1).
\end{equation}
Thus a fixed group at its successive dominant times grows internally
by at most $B$ every two cycles. The intermediate times are controlled
by the first relation. The initial tail group changes by only a fixed
bounded term in the first step. Summing the geometric series proves
\begin{equation}\label{eq:Sbound}
 S_k\le C_s(S_0+1)\nu^k.
\end{equation}
\subsubsection{Continuation and gradient decay}

Set $R_0=S_0+1$. Equations~\eqref{eq:Drec}--\eqref{eq:Sbound} give
$|E_k|\le C_eR_0\nu^k$, including $k=0$.
Variation of constants gives
\begin{equation}\label{eq:Dvariation}
 D_k=D_0\varphi_k+\sum_{j=0}^{k-1}E_j\varphi_{k-1-j}.
\end{equation}
The convolution bound \eqref{eq:commonConvolution}, with $E=C_eR_0$ and $C_d=C_eC_f/(\sigma-\nu)$, gives
\begin{equation}\label{eq:Dcontrol}
 (D_0-C_dR_0)\sigma^k\le D_k
 \le(C_fD_0+C_dR_0)\sigma^k.
\end{equation}
Choose $D_0>\max\{2C_dR_0,8C_sR_0,4\Gamma_*\}$. It follows that
\begin{equation}\label{eq:invariance}
 D_k\ge\tfrac12D_0\sigma^k,\qquad
 \Gamma_k\ge\tfrac14D_0\sigma^k>\Gamma_*.
\end{equation}
Suppose that the prescribed separated alternation first fails at $N$.
All estimates
up to $N-1$ are valid. Applying \eqref{eq:top}--\eqref{eq:tail} at that
time gives the values of both groups at $N$, and
\eqref{eq:Dcontrol}--\eqref{eq:invariance} show that they still have
the required ordering and separation. This is a contradiction.
The first transition is justified in the same way using
\eqref{eq:Dinit}. A threshold depending only on $s$ follows by
$R_0<s+1$.

For $i\in K_k$, the current filter has logarithm at most
$-2\Gamma_k+C$, whereas the preceding filter is bounded in logarithm.
For $i\in L_k$, the preceding filter has logarithm at most
$-2\Gamma_{k-1}+C$ and the current one is bounded. Hence
\begin{align*}
 P_{i,k+1}-P_{i,k}&\le-2(\ell-1)\Gamma_k+\ell C&& (i\in K_k),\\
 P_{i,k+1}-P_{i,k}&\le-2\Gamma_{k-1}+\ell C&& (i\in L_k).
\end{align*}
Every component thus accumulates a negative logarithmic loss of order
$\sigma^k$. Conversely, \eqref{eq:filterbound}, \eqref{eq:Sbound}, and
\eqref{eq:Dcontrol} bound every filter logarithm in absolute value by
$C'\sigma^k$. Summation proves the two-sided envelope, and
Lemma~\ref{lem:transfer} proves the rate for all actual updates.

The inequalities \eqref{eq:openset} are strict and continuous in the
nonzero spectral projection norms. They are feasible, for example by
taking $P_{i,0}$ in a short interval about $D_0$ for $i\in I$ and in a
short interval about zero for $i\in J$. Each component can vary
independently. Hence this is an open set of positive measure, including
when eigenspaces have multiplicity. Multiplication of the entire initial
gradient by any positive scalar preserves the conditions, so the set
contains initial gradients of arbitrarily small norm. This completes
the proof of Theorem~\ref{thm:B2}.

\subsection{Overlapping dominant sets for \texorpdfstring{$m+1<q<2m$}{m+1<q<2m}}\label{sec:overlap}

The preceding theorem proves actual superlinear decay at $q=2m$.
For $m+1<q<2m$, two dominant sets of size $m$ must overlap. Deleting
spectral points from the disjoint construction does not by itself
control the shared components. We use the following overlapping sets,
together with the one-sided filter estimate and recurrence framework
of Section~\ref{sec:forcedFramework}.

Let $q=m+r$ with $2\le r<m$ and partition the ordered support into
$L=\{1,\ldots,r\},\qquad C=\{r+1,\ldots,m\},\qquad R=\{m+1,\ldots,m+r\}.$
These sets describe the spectral-weight geometry in the proof; the
algorithm continues to compute and use all $m$ historical Ritz nodes.
The outer groups $L,R$ each have $r$ points; the common group $C$
has $m-r=2m-q$ points. At even cycles the spectral components indexed by $L\cup C$ dominate
their complement, and at odd cycles those indexed by $C\cup R$ do so.
The common group remains between the two outer groups in log amplitude.

As in Section~\ref{sec:two}, $P_{i,k}=\log\|E_i h_k\|$. For
$K=L,R$, define $\bar P_{K,k}=r^{-1}\sum_{i\in K}P_{i,k}$ and
$S_k=\max_{K=L,R}\max_{i\in K}|P_{i,k}-\bar P_{K,k}|$.
Let $T_k$ be the mean of $L$ at even $k$ and of $R$ at odd $k$,
and let $U_k$ be the other outer mean. Set $D_k=T_k-U_k$ and
$Y_{i,k}=P_{i,k}-(T_k+U_k)/2$ for $i\in C$. In particular,
\[
\begin{gathered}
 D_0=\bar P_{L,0}-\bar P_{R,0},\quad
 S_0=\max_{K=L,R}\max_{i\in K}|P_{i,0}-\bar P_{K,0}|,\\
 Y_{i,0}=P_{i,0}-\tfrac12(\bar P_{L,0}+\bar P_{R,0})\quad(i\in C).
\end{gathered}
\]

The outer gap and spread satisfy the same recurrences as in the
separated construction. The new task is to control the offsets $Y_{i,k}$
of the shared components: they can grow on the same exponential scale
as the gap. Their leading coefficients must therefore stay small enough
to preserve the ordering. A first-failure argument closes the trajectory,
after which componentwise filter losses give gradient decay.

\begin{theorem}[An open basin with overlapping dominant sets]\label{thm:overlap}
Let $m+1<q<2m$, $\ell\ge4$, and use fixed nonannihilating
initialization in \algRCR. For every $s>0$ there exists $D_*(s)<\infty$
such that the strict conditions
\begin{equation}\label{eq:overlapInitial}
 S_0<s,\qquad \max_{i\in C}|Y_{i,0}|<s,\qquad D_0>D_*(s)
\end{equation}
give a nonempty open set of full-rank, nonannihilating trajectories.
The dominant sets alternate as specified above. The historical
anchors satisfy the two-sided envelope \eqref{eq:B1envelope}, with
$\sigma_\ell$ from \eqref{eq:sigma}, and the full actual gradient
sequence satisfies the exact rate \eqref{eq:B1rate}. The corresponding
iterate errors have the same rate.
\end{theorem}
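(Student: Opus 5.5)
We follow the proof of Theorem~\ref{thm:B2} and add control of the shared offsets $Y_{i,k}$. The first step is a one-sided filter bound analogous to Lemma~\ref{lem:twobound}. Let $K=L\cup C$ at even cycles and $K=C\cup R$ at odd cycles, so $|K|=m$, and let the complement $K^c$ (the weak outer group, of size $r$) be the minority group. If $\Gamma=\min_{i\in K}P_i-\max_{j\in K^c}P_j\ge\Gamma_*$, the subset expansion \eqref{eq:subset} is dominated by $S=K$ in the denominator. For $i\in K$, the numerator is dominated by the subsets $(K\setminus\{i\})\cup\{j\}$ with $j\in K^c$, giving $\sum_{j\in K^c}C_{ij}u_j$. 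The difficulty compared with Lemma~\ref{lem:twobound} is that for $i\in C$ the set $K^c$ no longer lies entirely on one side of $i$ in general. Here, however, $K^c$ is the outer group $R$ at even cycles and $L$ at odd cycles, so $K^c$ lies on one side of every point of $K$. Hence the signs of $C_{ij}$ are again uniform in $j$, and \eqref{eq:filterbound} holds verbatim with $M_L$ replaced by $M_{K^c}$. The within-group independence also persists, which is essential because $C$ is not required to be internally balanced with $L$ or $R$.

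Next we write the forced recurrences. For $i\in K_k$, \eqref{eq:top} holds with $M_{L_k,k}$ the maximum over the weak outer group. For $i$ in the weak outer group, \eqref{eq:tail} holds. The shared components lie in $K_k$ at every cycle, so they follow \eqref{eq:top} at every step:
\[
P_{i,k+1}=-aP_{i,k}+(a+1)M_{k}+O(1),\qquad i\in C.
\]
Taking outer group means reproduces \eqref{eq:Drec}--\eqref{eq:Dinit} and the spread bound \eqref{eq:Sbound} exactly, since the outer groups play the roles of $I,J$. These two recurrences do not involve $C$, because the filter bound treats $K$ uniformly.

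Then we compute the offset recurrence. Using $M_k=U_k+O(S_k)$ and the means $T_{k+1}$ (the new dominant group, formerly weak) and $U_{k+1}$ (the former dominant outer group), we obtain
\[
Y_{i,k+1}=-aY_{i,k}+\alpha D_k+\beta D_{k-1}+O(S_k+S_{k-1}+1)
\]
for explicit constants $\alpha,\beta$ obtained from \eqref{eq:top}--\eqref{eq:tail}. The key point, and the main obstacle, is to show that the forced part keeps $Y_{i,k}$ at most a fixed fraction of $D_k$, even though the homogeneous factor $-a$ exceeds $\sigma$ in modulus. To handle this, we substitute $D_k\approx D_0\varphi_k$ and solve for the particular solution proportional to $\sigma^k$. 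The coefficient is $(\alpha\sigma+\beta)/(\sigma(\sigma+a))$, and the required inequality is $|Y_{i,k}|<D_k/2-S_k-\Gamma_*$, that is, that $C$ stays strictly between the two outer groups. The homogeneous $(-a)^k$ mode would eventually violate this inequality, so we expect that the exact forcing must cancel it. The cancellation should come from a relation such as $P_{i,k+1}+aP_{i,k}=(a+1)M_k$ combined with the identical relation for the dominant outer group. Subtracting the dominant-outer-group relation from the $C$ relation gives $P_{i,k+1}-P_{t,k+1}=-a(P_{i,k}-P_{t,k})+O(1)$ for $t\in K_k$ outer. We would therefore first track the differences $Z_{i,k}=P_{i,k}-T_k$ at alternating times, in the manner of \eqref{eq:spread}, hoping for growth rate $\nu$ over two cycles as the spread bound gives, together with the sign-conjugate convolution \eqref{eq:commonConvolution}. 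If only the one-step factor $a$ survives, we would instead exploit the fact that $C$ alternately sits in the same relative position to each outer group, which gives growth $B$ per two cycles, again scale $\nu<\sigma$ by \eqref{eq:separation}. With $|Y_{i,k}-c_*D_k|\le C R_0\nu^k$ and $|c_*|<1/2$, the gap dominates.

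Finally, we close the argument by a first-failure argument as in Theorem~\ref{thm:B2}. We choose $D_0$ large relative to $R_0=1+s$ so that \eqref{eq:Dcontrol} and the offset bound give $\Gamma_k\ge cD_0\sigma^k>\Gamma_*$, including at the initialization step. Componentwise losses then follow as before: each index lies in a dominant set at least every other cycle, so it loses order $\sigma^k$. Summing these losses gives the envelope \eqref{eq:B1envelope}, and Lemma~\ref{lem:transfer} gives \eqref{eq:B1rate} and the iterate-error statement. The set defined by \eqref{eq:overlapInitial} is open because its conditions are strict and continuous in the projection norms. It is nonempty: place $L$ near $D_0$, $R$ near $0$, and $C$ near $D_0/2$.
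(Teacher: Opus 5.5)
There is a genuine gap in your treatment of the common components, and it starts with an incorrect recurrence. Your filter bound, outer-gap analysis, loss summation and openness argument are sound.

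You apply \eqref{eq:top} to $i\in C$. But \eqref{eq:top} was derived for components that are selected at cycle $k$ and weak at cycle $k-1$, so that $\log|p_{k-1}(\lambda_i)|=O(1)$. A common component is selected by both $p_k$ and $p_{k-1}$, so \eqref{eq:B} and the one-sided bound give
\[
 P_{i,k+1}=-aP_{i,k}+(a+1)M_k-2P_{i,k-1}+2M_{k-1}+O(1),\qquad i\in C,\ k\ge1 .
\]
Omitting the term $-2(P_{i,k-1}-M_{k-1})$ is what creates the homogeneous factor $-a$, with $a>\sigma$, that you identify as the main obstacle. Your relation $P_{i,k+1}-P_{t,k+1}=-a(P_{i,k}-P_{t,k})+O(1)$ fails for the same reason.

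The ways around this that you suggest are conjectural. Your concluding estimate $|Y_{i,k}-c_*D_k|\le CR_0\nu^k$ is false for the true dynamics: the $\sigma^k$ mode of $Y_{i,k}$ alternates in sign while $D_k>0$, and its coefficient depends on $Y_{i,0}$. Tellingly, your argument never uses the hypothesis $\max_{i\in C}|Y_{i,0}|<s$ in \eqref{eq:overlapInitial}.

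With the correct update, subtract the new midpoint
\[
 \tfrac12(T_{k+1}+U_{k+1})=U_k-\tfrac{a}{2}D_k-D_{k-1}+O(S_k+S_{k-1}+1).
\]
This cancels every absolute mean and every gap term, leaving
\[
 Y_{i,k+1}=-aY_{i,k}-2Y_{i,k-1}+F_{i,k},\qquad |F_{i,k}|\le CR_0\nu^k .
\]
The virtual state $Y_{i,-1}=0$ is justified by the actual first transition $Y_{i,1}=-aY_{i,0}+O(S_0+1)$. Thus $(-1)^kY_{i,k}$ obeys the same forced recurrence as the gap, with characteristic roots $\sigma$ and $\varrho$. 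Then \eqref{eq:commonConvolution} gives $|Y_{i,k}|\le(C_f|Y_{i,0}|+C_dR_0)\sigma^k$.

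This is where $|Y_{i,0}|<s$ enters. Take $D_0$ larger than a fixed multiple of $C_fs+C_d(s+1)$, in addition to your conditions involving $S_k$ and $\Gamma_*$. Then $|Y_{i,k}|<D_k/8$ and $S_k<D_k/8$, so the common group stays at least $D_k/4$ above the weak outer group, and the first-failure argument closes. The offset is not slaved to the gap. It is an independent mode with the same growth rate $\sigma$, whose coefficient the initial conditions make small relative to $D_0$.
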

\begin{proof}
First extend the proof of Lemma~\ref{lem:twobound} as follows.
If $I$ consists of $m$ spectral points, its nonempty complement $J$
lies entirely on one side of $I$ in the ordered spectrum, and
$\min_I P-\max_J P\ge\Gamma_*$, then
\begin{equation}\label{eq:oneSidedFilter}
 \log|p(\lambda_i)|=
 \begin{cases}2(\max_J P-P_i)+O(1),&i\in I,\\O(1),&i\in J.
 \end{cases}
\end{equation}
The proof uses exactly the same leading subsets
$(I\setminus\{i\})\cup\{j\}$, $j\in J$, whose coefficients have
one sign for fixed $i$. It does not use $|J|=m$. The remainder
is bounded by a constant times $e^{-2\Gamma}$ relative to the
leading weak weight, uniformly in all within-set ratios. Both
$L\cup C$ and $C\cup R$ satisfy the one-sided spectral condition.
Choose common constants for these two possibilities.

On a finite segment with the proposed ordering, $T_k$ and $U_k$
are respectively the dominant and weak outer means. The spread
$S_k$ concerns only the outer groups; $Y_{i,k}$ tracks each common
component relative to their midpoint.
The outer components satisfy \eqref{eq:top}--\eqref{eq:tail},
with the weak maximum taken over the opposite outer group.
Their bounds do not involve $Y_{i,k}$, since
\eqref{eq:oneSidedFilter} is uniform within the selected set.
Thus, with $a=2\ell-3$ and $R_0=S_0+1$,
\begin{align}
 S_k&\le C_sR_0\nu^k,\qquad
 D_{k+1}=aD_k-2D_{k-1}+E_k,\quad |E_k|\le C_eR_0\nu^k,
 \label{eq:overlapOuter}
\end{align}
where $D_{-1}=0$ incorporates the bounded initialization error.

Every common component is selected in both consecutive filters.
For $k\ge1$, its update is
\[
 P_{i,k+1}=-aP_{i,k}+(a+1)U_k-2P_{i,k-1}+2U_{k-1}
             +O(S_k+S_{k-1}+1).
\]
At cycle $k+1$, the midpoint $(T_{k+1}+U_{k+1})/2$ is
$U_k-aD_k/2-D_{k-1}+O(S_k+S_{k-1}+1)$.
Subtracting this midpoint cancels all absolute means and yields
\begin{equation}\label{eq:overlapCore}
 Y_{i,k+1}=-aY_{i,k}-2Y_{i,k-1}+F_{i,k},\qquad
 |F_{i,k}|\le C R_0\nu^k.
\end{equation}
The actual first transition gives
$Y_{i,1}=-aY_{i,0}+O(S_0+1)$, so $Y_{i,-1}=0$ is again a
convenient virtual state with bounded forcing at $k=0$.
With the error constant $K_0$ of Appendix~\ref{app:constants},
the error decomposition recorded there gives
\[
 |F_{i,k}|\le\tfrac{a+1}{2}S_k+S_{k-1}+2K_0\qquad(k\ge1),
\]
and $|F_{i,0}|\le(a+1)S_0/2+2K_0$. In particular the constant
$C_e$ controlling the outer gap forcing also controls this forcing.
Multiplication by $(-1)^k$ converts \eqref{eq:overlapCore} to
the gap recurrence of Section~\ref{sec:forcedFramework}. Applying
\eqref{eq:commonConvolution} proves, uniformly over the finite segment,
\[
 D_k\ge(D_0-C_dR_0)\sigma^k,\quad
 D_k\le(C_fD_0+C_dR_0)\sigma^k,\quad
 |Y_{i,k}|\le(C_f|Y_{i,0}|+C_dR_0)\sigma^k.
\]
The constants are those of Appendix~\ref{app:constants}, with the
one-sided filter bound for these sets. The coefficient-level
verification there shows that the same $C_d$ controls both forcings.
An explicit
sufficient choice for \eqref{eq:overlapInitial} is
\begin{equation}\label{eq:overlapThreshold}
 D_*(s)=1+\max\{2C_d(s+1),16C_s(s+1),
                16[C_fs+C_d(s+1)],8\Gamma_*\}.
\end{equation}
For $D_0>D_*(s)$ these estimates give
$S_k<D_k/8$, $|Y_{i,k}|<D_k/8$, and $D_k/4>\Gamma_*$.
The dominant outer group is then above the common group, and the
common group is above the weak outer group, each by at least
$D_k/4$. In particular the selected set has the required separation
from its complement. If the ordering first failed at time $N$, these recurrences
would compute $D_N,S_N,Y_{i,N}$ using filter estimates only at
times less than $N$. The displayed bounds would give the same
strict ordering at $N$, a contradiction. For $N=1$, the initialized
first-transition formulas provide these bounds directly. The ordering
therefore persists for all cycles.
The initial inequalities are open and feasible, for example by
placing the outer amplitudes near $D_0$ and $0$, and the common
amplitudes near $D_0/2$. Nonzero filters imply full support and
full-rank historical extraction throughout.

It remains to recover the gradient norm from this geometry. Let
$I_k=L\cup C$ for even $k$ and $I_k=C\cup R$ for odd $k$, and set
$\Gamma_k=\min_{i\in I_k}P_{i,k}-\max_{j\notin I_k}P_{j,k}$.
The established bounds give $\Gamma_k\ge D_k/4\ge c\sigma^k$ for
some fixed $c>0$. By \eqref{eq:oneSidedFilter}, selected components
have filter logarithm at most $-2\Gamma_k+K_f$, and the other
components have filter logarithms bounded in absolute value by the
same fixed constant $K_f$. Every outer component belongs to one of
$I_k,I_{k-1}$, while each common component belongs to both.
The historical recurrence \eqref{eq:B}, with $\ell-1\ge1$, therefore
gives $P_{i,k+1}-P_{i,k}\le-c_1\sigma^{k-1}+C_1$, with $c_1>0$,
for every component and $k\ge1$. Summation proves the upper envelope.
Conversely, the upper bounds on $D_k,S_k,|Y_{i,k}|$ and
\eqref{eq:oneSidedFilter} bound every filter logarithm in absolute
value by $C_2\sigma^k$. Summing this bound proves the lower envelope.
Lemma~\ref{lem:transfer} gives the exact rate for all actual updates;
fixed norm equivalence transfers it to iterate errors.
\end{proof}

The separated and overlapping constructions establish actual
doubly exponential convergence throughout $m+1<q\le2m$, under
explicitly structured initial conditions. They do not establish
almost-everywhere convergence in that range. To seek a
general-dimensional guarantee, we now change which gradient defines
the historical Ritz polynomial.

\section{Changing the history construction: a general-dimensional guarantee}\label{sec:A}

Sampling makes the cycle-start gradient $F_k$ the defining anchor
of the Ritz polynomial, by Lemma~\ref{lem:history}. The endpoint
relation \eqref{eq:A} then induces a current-weight logarithmic map
followed by a prescribed translation.

The next two lemmas reveal the mechanism behind its dimension
condition. On every nonannihilating branch the absolute Jacobian is
$J=(2\ell-1)^m$, and each target has exactly $N=\binom{q-1}{m}$
inverse images. Each inverse branch scales volume by $J^{-1}$;
adding all branches gives the inverse-image factor $\rho=N/J$.
When $\rho<1$, inverse images of bounded sets lose volume
geometrically. To obtain gradient decay, we must additionally control
near cancellation of the filter and the cumulative loss of every
component. This is where the sublevel estimate enters. The proof
develops the volume-and-loss strategy of~\cite{GuCSD} for a
multiple-Ritz history built from actual sampling updates.

The rolling recurrence \eqref{eq:B} retains the preceding filter
$p_{k-1}$, so its state is not described by the current weights alone.
The calculation below concerns the sampled rule \eqref{eq:A}, with
every sampling update counted.

We first analyze prescribed sampling schedules. The
prescribed steps are positive and independent of the initial point and
its trajectory. They may vary both within and between cycles. Assume
that there are fixed $0<\underline d\le\overline d<\infty$ such that
\begin{equation}\label{eq:samplingbound}
 \underline d\le|1-\tau_{k,j}\lambda_i|\le\overline d
 \quad\text{for every }k,j,i.
\end{equation}
A fixed nonannihilating list satisfies this assumption. So does any
predetermined positive list bounded above by
$\bar\tau<1/\lambda_q$.

\begin{theorem}[A combinatorial sufficient condition]\label{thm:A1}
Let $1\le m<q$, $\ell\ge2$, and suppose \eqref{eq:samplingbound} holds.
If
\begin{equation}\label{eq:Acondition}
 (2\ell-1)^m>\binom{q-1}{m},
 \qquad
 \kappa_0=\frac1{q-1}\log\frac{(2\ell-1)^m}{\binom{q-1}{m}},
\end{equation}
then, for almost every initial point and every $0<\kappa<\kappa_0$,
there is $c>0$ such that all sufficiently late actual gradients of
\algSCR\ satisfy
\begin{equation}\label{eq:Arate}
 \|g_t\|\le\exp\left[-c\exp\left(\frac{\kappa t}{m(\ell+1)}\right)\right].
\end{equation}
The corresponding iterates converge R-superlinearly to $x_*$.
\end{theorem}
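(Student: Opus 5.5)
We work in the logarithmic weight coordinates $w_k\in\R^{q-1}$ of \eqref{eq:w}, with $u_{i,k}=\|E_iF_k\|^2$. By \eqref{eq:A} the cycle map is
\[
w_{k+1}=\Phi(w_k)+\eta_k,\qquad \Phi_i(w)=w_i+2\ell\log\left|\frac{p_w(\lambda_i)}{p_w(\lambda_r)}\right|,
\]
where $p_w$ is the normalized orthogonal filter for the current weights. The translation $\eta_k$ comes from $d_k$ and, by \eqref{eq:samplingbound}, is bounded uniformly in $k$ and in the initial point.

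\emph{Branch structure and volume.} Using the subset expansion \eqref{eq:subset}, I would partition weight space into the regions where a single $m$-subset $S$ dominates $Z_m(u)$; up to the fixed reference convention, there are $\binom{q-1}{m}$ relevant branches. On each branch, Lemma~\ref{lem:twobound} and its one-sided extension \eqref{eq:oneSidedFilter} suggest that $\Phi$ is, up to bounded error, affine. Its linear part multiplies each of the $m$ selected log-ratio directions by $-(2\ell-1)$ and leaves the complement essentially unchanged. This gives absolute Jacobian $J=(2\ell-1)^m$, matching the preamble to this section.

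A cleaner route uses the exact algebra instead. The identity $u_i p(\lambda_i)^{2\ell}$, combined with the fact that $p$ is determined by its $m$ roots, makes the map "weights $\mapsto$ roots" a rational parametrization. I would then show that, counted with the orthogonality equations, each target has at most $N=\binom{q-1}{m}$ preimages and $|\det D\Phi|=J$ on each branch. The consequence is that for every bounded set $B$,
\[
\vol(\Phi^{-k}\cdots(B))\le C\rho^k\vol(B),\qquad \rho=N/J.
\]
This must be done uniformly in the translations $\eta_k$, which is harmless because translations preserve volume and the preimage count.

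\emph{Almost-everywhere escape with rate.} Fix $\kappa<\kappa_0$ and consider the set $B_k$ of initial points whose trajectory satisfies $\|w_k\|\le e^{\kappa k}$. Its volume is at most
\[
\rho^k\cdot\vol(\text{ball of radius }e^{\kappa k})\sim\rho^k e^{(q-1)\kappa k},
\]
and this is summable exactly when $\kappa<\kappa_0$. Borel--Cantelli, lifted to initial gradients by block polar coordinates and Fubini as in Theorem~\ref{thm:B1}, then gives $\|w_k\|>e^{\kappa k}$ eventually for almost every start.

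\emph{From state growth to gradient decay.} Finally, convert state growth into componentwise loss, as in the proof of Theorem~\ref{thm:B1}. Large $\|w_k\|$ means some separated top set exists, and the filter has logarithm $\le -2\Gamma+C$ there while remaining $O(1)$ elsewhere. I would control three things:
\begin{itemize}
\item near-cancellation, since $|p(\lambda_i)|$ has no lower bound when roots approach eigenvalues;
\item the requirement that every component, including those currently weak, eventually loses an amount of order $e^{\kappa k}$;
\item the cumulative $\log$-loss of each component, so that it is bounded by $\sum_{j<k}e^{\kappa j}$ and hence $\log\|F_k\|\le -c\,e^{\kappa k}$.
\end{itemize}
For near-cancellation, I would use a sublevel-set estimate, as in \cite{GuCSD}: the set of $w$ where $\log|p_w(\lambda_i)|<-T$ has volume at most $e^{-cT}$ inside balls. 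Combined with the same Borel--Cantelli summation, this shows that almost surely the filters are never annihilating. For the second point, the weak components are handled by the translation-invariant loss identity $v_{i,k}$, and their ratio to the dominant ones is itself part of the growing state. Lemma~\ref{lem:transfer} with $L=m(\ell+1)$ then gives \eqref{eq:Arate}, and norm equivalence gives the iterate statement.

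The main obstacle is the exact volume identity: proving that each branch has Jacobian exactly $(2\ell-1)^m$ and multiplicity $\binom{q-1}{m}$, with bounded rather than merely approximate errors. I would try to obtain it from the root parametrization, counting solutions by ordering the roots among the gaps of the spectrum.
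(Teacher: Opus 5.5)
Your volume mechanism is the paper's: an exact Jacobian $(2\ell-1)^m$, one inverse branch per root-gap pattern, and translations that change neither. You leave it unproved. The paper obtains it from $DR\,DL(R)=-2I_m$ with Sylvester's identity, and from the strictly convex $\Phi(c)=\sum_iU_i/\pi_c(\lambda_i)^{2\ell-2}$ on each sign cell.

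The genuine gap is elsewhere. The event you feed into Borel--Cantelli, $\{\|w_k\|\le e^{\kappa k}\}$, is the wrong one: for $q>m+1$, growth of the log-ratio state does not force loss in every component. The only pointwise link is one-sided. After rescaling the support to $[0,1]$ all losses are nonnegative, so $z_{i,N}\le z_{i,0}+2\ell S_{r,N}+ND_\delta$. Upward escape relative to $r$ therefore requires reference loss, but downward escape does not. A component can be driven far down by near-cancellation of its own filter value while the reference loses only $O(1)$ per cycle. Then $\|w_k\|$ grows while $\|E_rF_k\|$ decays only geometrically, and your event does not exclude this.

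Your intermediate claims do not close the gap either. A large $\|w_k\|$ need not produce a gap between the $m$th and $(m+1)$st weights. The estimate \eqref{eq:oneSidedFilter} requires the complement to lie on one side of the selected set in the spectrum; otherwise the leading coefficients have both signs and can cancel. The loss identity for $v_{i,k}$ rests on the explicit formula \eqref{eq:plicit}, which is valid only for $q=m+1$.

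The paper bounds the event that matters, $\mathcal E_N=\{S_{r,N}\le M_0e^{\kappa N}\}$, and shows that it confines the trajectory. Upward confinement is the inequality above. Downward confinement uses \eqref{eq:referenceweight}: small reference loss forces at least $m$ other weights to be at least $C^{-1}e^{-W_N}$. This supplies a normalizing monomial $u_{S_0}$ bounded below, so that outside the sublevel sets of $\mathcal N_i/u_{S_0}$ at level $e^{-bN}$ every filter logarithm is at least $-A_N$. Hence $\mathcal E_N\subseteq\Psi_N^{-1}\mathcal B_N\cup\bigcup_{k<N}\Psi_k^{-1}\mathcal C_N$, and Lemma~\ref{lem:thin} makes both volumes summable.

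Your sublevel estimate cannot play this role, and as stated it is false. $|p_w(\lambda_i)|$ is legitimately tiny wherever component $i$ strongly dominates, and that region contains whole balls of any radius. Separating genuine cancellation from such domination is exactly what the $u_{S_0}$ normalization does. Used only to exclude exact annihilation, your estimate adds nothing beyond the null zero sets of the $\mathcal N_i$.
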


\subsection{Constant Jacobian and the exact number of inverse branches}

Fix a reference index $r$, normalize $u_r=1$, and let
$z_i=\log(u_i/u_r)$ for $i\ne r$. Write $d=q-1$ for the dimension of
this coordinate space. Here $u_i=\|E_iF_k\|^2$ before the common
normalization by $u_r$, when the coordinates describe SCR cycle $k$.
For the moment omit the sampling polynomial
and consider the map induced by one current-weight filter:
\begin{equation}\label{eq:T}
 T(z)_i=z_i+2\ell\log\left|\frac{\pi(\lambda_i;z)}
 {\pi(\lambda_r;z)}\right|.
\end{equation}
For $c=(c_0,\ldots,c_{m-1})\in\R^m$, write
$\pi_c(\xi)=\xi^m+\sum_{a=0}^{m-1}c_a\xi^a$. The coefficient
map $R(z)$ assigns to $z$ the coefficients of the monic polynomial
orthogonal for $u_i=e^{z_i}$, $i\ne r$, and $u_r=1$. Define
$L(c)=(2\log|\pi_c(\lambda_i)/\pi_c(\lambda_r)|)_{i\ne r}$.
Then $T(z)=z+\ell L(R(z))$. All maps are restricted to the
nonannihilating domain where the required polynomial values are
nonzero. The symbol $D$ before a map denotes its Jacobian matrix;
$\vol$ denotes Lebesgue measure in these log coordinates.

\begin{lemma}[Multiple-Ritz Jacobian]\label{lem:jacA}
On the open set where no filter value vanishes,
\begin{equation}\label{eq:DRDL}
 DR(z)DL(R(z))=-2I_m,
 \qquad \det DT(z)=(1-2\ell)^m.
\end{equation}
\end{lemma}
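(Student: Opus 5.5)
The plan is to obtain $DR$ by implicit differentiation of the orthogonality conditions \eqref{eq:orthogonal}, to compute $DL$ directly, and to observe that their product collapses to a multiple of the Hankel moment matrix. The determinant of $DT$ then follows from Sylvester's determinant identity. Throughout I work at a point $z$ of the nonannihilating domain, with $u_i=e^{z_i}$ for $i\ne r$, $u_r=1$, and $\pi=\pi_{R(z)}$. By Lemma~\ref{lem:history} this $\pi$ is well defined, since $m<q$ and all weights are positive.

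\emph{Step 1: the derivative of $R$.} Define $F:\R^{d}\times\R^m\to\R^m$ by
\[
 F_a(z,c)=\sum_{i=1}^q u_i\,\pi_c(\lambda_i)\lambda_i^a,\qquad a=0,\ldots,m-1,
\]
so that $R(z)$ is characterized by $F(z,R(z))=0$. Two partial derivatives are needed.
\begin{itemize}
\item Differentiating in the coefficients gives $\partial_{c_b}F_a=\sum_i u_i\lambda_i^{a+b}=:H_{ab}$. This $H$ is the Hankel moment matrix of a positive measure on $q>m$ distinct points, so it is positive definite.
\item Differentiating in the log weights gives $\partial_{z_i}F_a=u_i\pi(\lambda_i)\lambda_i^a=:W_{ai}$ for $i\ne r$.
\end{itemize}
By the implicit function theorem, $R$ is smooth (indeed real analytic), and
\[
 DR=-H^{-1}W .
\]

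\emph{Step 2: the derivative of $L$.} Since no filter value vanishes, $L$ is smooth near $R(z)$. For $i\ne r$ and $b<m$,
\[
 \partial_{c_b}L_i=2\bigl(\lambda_i^b/\pi(\lambda_i)-\lambda_r^b/\pi(\lambda_r)\bigr).
\]

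\emph{Step 3: the key cancellation.} Multiplying out, the $(a,b)$ entry of $W\,DL$ is
\[
 2\sum_{i\ne r}u_i\lambda_i^{a+b}
 -\frac{2\lambda_r^b}{\pi(\lambda_r)}\sum_{i\ne r}u_i\pi(\lambda_i)\lambda_i^a .
\]
Orthogonality \eqref{eq:orthogonal}, with $u_r=1$, says that the last sum equals $-\pi(\lambda_r)\lambda_r^a$. The second term is therefore $+2\lambda_r^{a+b}$, which is exactly the missing reference term of the moment sum. Hence $W\,DL=2H$, and
\[
 DR\,DL=-H^{-1}(2H)=-2I_m .
\]
This bookkeeping is the only delicate point: the reference index is omitted from $W$ but restored through orthogonality. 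It is also what makes the Jacobian independent of both $z$ and the spectrum.

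\emph{Step 4: the determinant.} From $T(z)=z+\ell L(R(z))$ we get $DT=I_d+\ell\,DL\,DR$. Sylvester's identity $\det(I_d+XY)=\det(I_m+YX)$, applied with $X=\ell\,DL$ and $Y=DR$, then gives
\[
 \det DT=\det\bigl(I_m+\ell\,DR\,DL\bigr)=\det\bigl((1-2\ell)I_m\bigr)=(1-2\ell)^m ,
\]
which is the second identity in \eqref{eq:DRDL}.

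I do not expect a serious obstacle. The only care required is the dimension mismatch: $DR$ is $m\times d$ and $DL$ is $d\times m$, which is precisely why Sylvester's identity is the right tool.
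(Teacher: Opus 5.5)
Your proposal is correct and follows the paper's proof essentially step for step. You differentiate the orthogonality equations to get $DR=-H^{-1}W$, use the orthogonality identity to restore the omitted reference term so that $W\,DL=2H$, and then apply Sylvester's identity, $\det(I_d+\ell\,DL\,DR)=\det(I_m+\ell\,DR\,DL)=(1-2\ell)^m$.
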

\begin{proof}
Put $v_i=(1,\lambda_i,\ldots,\lambda_i^{m-1})^T$ and
$H=\sum_i u_iv_iv_i^T\succ0$. Differentiating the orthogonality
equations gives, for $i\ne r$,
\[
 \frac{\partial R}{\partial z_i}
 =-u_i\pi(\lambda_i)H^{-1}v_i,
 \qquad
 \frac{\partial L_i}{\partial c}
 =2\left(\frac{v_i}{\pi(\lambda_i)}
 -\frac{v_r}{\pi(\lambda_r)}\right)^T.
\]
The identity $\sum_i u_i\pi(\lambda_i)v_i=0$ supplies the missing
reference term in the product, leaving $-2H^{-1}H$.
Sylvester's determinant identity then gives
$\det(I_d+\ell DL\,DR)=\det(I_m+\ell DR\,DL)=(1-2\ell)^m$.
\end{proof}

\begin{lemma}[Exact inverse-branch count]\label{lem:branches}
For $\ell\ge2$, every target $z'$ has exactly
$N=\binom{q-1}{m}$ nonannihilating inverse images under $T$.
There is one in each choice of $m$ different spectral gaps for the
roots of the old polynomial. Consequently, with
$\rho=N/(2\ell-1)^m$,
\begin{equation}\label{eq:volumeA}
 \vol(T^{-1}E)=\rho\vol(E)
\end{equation}
for every measurable set $E$.
\end{lemma}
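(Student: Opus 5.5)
The plan is to reduce the inverse problem for $T$ to finding critical points of an explicit convex function, one on each cell of polynomial sign patterns. Fix a target $z'$ and put $u'_i=e^{z'_i}$ ($u'_r=1$). Suppose $T(z)=z'$, and let $\pi=\pi_c$ be the orthogonal polynomial for $u=e^{z}$. By \eqref{eq:T}, up to a common positive scale,
\[
 u_i'=u_i\,\pi(\lambda_i)^{2\ell}, \qquad\text{so}\qquad u_i=u_i'\,\pi(\lambda_i)^{-2\ell}.
\]
Substituting into the orthogonality equations \eqref{eq:orthogonal} turns the inverse problem into a problem for $c$ alone:
\[
 G(c):=\sum_{i=1}^q u_i'\,\pi_c(\lambda_i)^{1-2\ell}\,v_i=0,
\]
with $\pi_c(\lambda_i)\neq0$ for every $i$. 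Conversely, a nonannihilating solution $c$ of $G(c)=0$ defines $u_i=u'_i\pi_c(\lambda_i)^{-2\ell}>0$. For these weights $\pi_c$ is the orthogonal polynomial, by uniqueness in Lemma~\ref{lem:history}, and then $T(z)=z'$. So inverse images correspond bijectively to nonannihilating zeros of $G$.

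The key observation is that $G=\nabla\Phi$ for
\[
 \Phi(c)=\frac{1}{2-2\ell}\sum_i u_i'\,\pi_c(\lambda_i)^{2-2\ell}.
\]
This is where $\ell\ge2$ enters: $2-2\ell<0$ is even, so up to the positive factor $(2\ell-2)^{-1}$, $\Phi$ is a positive combination of $x\mapsto x^{-(2\ell-2)}$ composed with the affine maps $c\mapsto\pi_c(\lambda_i)$. The nonannihilating domain splits into open convex polyhedral cells
\[
 \Omega_s=\{c: s_i\pi_c(\lambda_i)>0\ \text{for all } i\},\qquad s\in\{\pm1\}^q.
\]
On each cell $-\Phi$ is strictly convex, since the vectors $v_i$ span $\R^m$ because $q>m$. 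Hence $\Phi$ has at most one critical point per cell.

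Next, only cells with exactly $m$ sign changes can contain a solution. If $G(c)=0$, then $\pi_c$ is orthogonal for positive weights, so by Lemma~\ref{lem:history} its roots are simple and lie in $(\lambda_1,\lambda_q)$. The sign-change argument in that proof shows that at most one root lies in each gap $(\lambda_j,\lambda_{j+1})$. Thus $s$ changes sign exactly $m$ times, with the sign fixed by monicity at $\lambda_q$. Such patterns correspond exactly to choices of $m$ of the $q-1$ gaps, which gives $\binom{q-1}{m}$ candidate cells. For each such cell I would show existence of a critical point by properness.

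First, $\Omega_s$ is bounded. A recession direction $\delta$ would be a nonzero polynomial of degree less than $m$ with $s_i\delta(\lambda_i)\ge0$ for all $i$. Then $\delta$ has a zero in each of the $m$ closed segments spanning the chosen gaps, and counting roots with the usual care at shared endpoints contradicts $\deg\delta<m$.

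Second, $\Omega_s$ is nonempty, since a product of linear factors with one root in each chosen gap lies in it.

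Third, $-\Phi\to+\infty$ at $\partial\Omega_s$, because some $\pi_c(\lambda_i)\to0$ there.

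A strictly convex function on a bounded open convex set that blows up at the boundary attains a unique interior minimum, so each of these cells contributes exactly one inverse image. This gives $N=\binom{q-1}{m}$, one inverse image per gap choice.

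For \eqref{eq:volumeA}, write the nonannihilating domain as the disjoint union of the open sets $U_s$ of $z$ whose orthogonal polynomial has sign pattern $s$. By the above, $T|_{U_s}$ is a bijection onto $\R^{d}$. Lemma~\ref{lem:jacA} gives it constant Jacobian determinant $\pm(2\ell-1)^m\ne0$, so by the inverse function theorem it is a diffeomorphism. Its inverse branch $\psi_s$ then satisfies $\vol(\psi_s(E))=(2\ell-1)^{-m}\vol(E)$. The annihilating set is a finite union of zero sets of nonzero real-analytic functions and is therefore null. Summing over the $N$ branches gives $\rho\,\vol(E)$.

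I expect the main obstacle to be the cell analysis: verifying boundedness of $\Omega_s$ rigorously at degenerate endpoints, and checking that the critical-point correspondence does not lose or duplicate solutions across cells. The convexity and blow-up parts are routine once the potential $\Phi$ is identified.
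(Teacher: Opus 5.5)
Your proposal is correct and follows the paper's proof essentially step for step. The two share the same sign-pattern coefficient cells, the same potential $\sum_i U_i\pi_c(\lambda_i)^{2-2\ell}$ (up to a negative constant factor) with boundary blow-up and strict convexity, the same converse via weights $u_i\propto U_i\pi_c(\lambda_i)^{-2\ell}$, and the same Jacobian-based volume summation. The only noticeable difference is boundedness of the cells: the paper notes that the strict sign conditions force one root into each chosen gap, so all roots and hence all coefficients are bounded, which avoids the shared-endpoint bookkeeping your recession-direction argument requires.
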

\begin{proof}
Two roots of a monic orthogonal polynomial cannot lie in the same
open gap between consecutive support points. Otherwise testing
orthogonality against the polynomial obtained by removing these two
linear factors gives a sum of nonnegative terms, at least one positive.
There are therefore at most $N$ possible root-gap patterns.

For a fixed pattern, choosing one root in each selected gap gives
a monic polynomial with the required strict signs at all support
points. The corresponding coefficient cell is the set of
monic polynomials satisfying those sign conditions.
It is a nonempty convex open set of coefficients, because these signs
are linear inequalities. They force one root in each prescribed gap,
so all roots and hence all coefficients are bounded.
Let $U_i>0$ represent the target weights. An inverse polynomial in
this cell must satisfy
\begin{equation}\label{eq:inversecondition}
 \sum_i\frac{U_iv_i}{\pi_c(\lambda_i)^{2\ell-1}}=0.
\end{equation}
Consider
\[
 \Phi(c)=\sum_i\frac{U_i}{\pi_c(\lambda_i)^{2\ell-2}}.
\]
It tends to infinity at the cell boundary and has positive definite
Hessian
\[
 (2\ell-2)(2\ell-1)
 \sum_i\frac{U_iv_iv_i^T}{\pi_c(\lambda_i)^{2\ell}}.
\]
Thus it has exactly one minimizer in the cell, whose stationarity
condition is \eqref{eq:inversecondition}. The weights
$u_i\propto U_i/\pi_c(\lambda_i)^{2\ell}$ make this polynomial
orthogonal and map to $U$. Conversely, every inverse must arise in
this fashion. Lemma~\ref{lem:jacA} and the inverse function theorem
give smooth inverse branches, each with absolute Jacobian
$(2\ell-1)^{-m}$. Summing the change-of-variables formula proves
\eqref{eq:volumeA}.
\end{proof}

For \algSCR\ the actual projective map in cycle $k$ is
\begin{equation}\label{eq:translated}
 T_k(z)=T(z)+\delta_k,\qquad
 (\delta_k)_i=2\log\left|\frac{d_k(\lambda_i)}{d_k(\lambda_r)}\right|.
\end{equation}
The vector $\delta_k$ is independent of $z$ and uniformly bounded by
\eqref{eq:samplingbound}. Hence, for
$\Psi_N=T_{N-1}\circ\cdots\circ T_0$ and $\Psi_0=\mathrm{id}$,
\begin{equation}\label{eq:nonautonomous}
 \vol(\Psi_N^{-1}E)=\rho^N\vol(E).
\end{equation}

\subsection{A sublevel estimate for exponential polynomials}

Near cancellation in the signed numerator of~\eqref{eq:subset} damps
one component, but need not produce loss in another. It can instead drive
their log-weight ratio far downward. To confine trajectories with
insufficient reference loss to a controlled box and apply the volume
estimate, we need a coefficient-uniform bound on these near-cancellation states.

\begin{lemma}\label{lem:thin}
Let
\[
 F(y)=b+\sum_{\varnothing\ne J\subseteq\{1,\ldots,d\}}
 a_J\exp\left(\sum_{j\in J}y_j\right),\qquad b\ne0.
\]
There are constants $C$ and $\epsilon_0>0$, depending only on $d$
and $b$, such that on every axis-parallel box of side length $L\ge1$,
\begin{equation}\label{eq:thin}
 \vol_d\{y\text{ in the box}:|F(y)|<\epsilon\}
 \le C\epsilon^{1/d}L^{d-1}\quad(0<\epsilon<\epsilon_0).
\end{equation}
The constants do not depend on the other coefficients or on the
position of the box.
\end{lemma}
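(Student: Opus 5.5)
We prove the sublevel bound \eqref{eq:thin} by induction on $d$, reducing at each stage to a one-variable estimate for exponential polynomials in a single coordinate.

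\emph{One-variable estimate.} For fixed values of the other coordinates, $F$ restricted to the line in $y_d$ has the form
\[
 \phi(s)=\beta+\gamma e^{s},\qquad
 \beta=b+\sum_{J\not\ni d}a_J e^{\sum_{j\in J}y_j},\quad
 \gamma=\sum_{J\ni d}a_J e^{\sum_{j\in J\setminus\{d\}}y_j}.
\]
The point is to make the constant depend only on the constant term. A function $\beta+\gamma e^s$ is monotone in $s$. If $\gamma\ne0$ its derivative is $\gamma e^s=\phi(s)-\beta$. On $\{|\phi|<\epsilon\}$ with $|\beta|\ge2\epsilon$ we get $|\phi'|\ge|\beta|/2$, so the set is an interval of length at most $4\epsilon/|\beta|$. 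That is useful only when $\beta$ is not small.

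\emph{Induction.} The plan is a two-level split. Let $G(y')=\beta(y')$; it is an exponential polynomial in $d-1$ variables with the same constant term $b$. Choose a threshold $\eta=\epsilon^{1/d}\cdot c$.

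On the slices where $|G(y')|<\eta$, the inductive hypothesis bounds the $(d-1)$-volume by $C\eta^{1/(d-1)}L^{d-2}$. Multiplying by the side $L$ in $y_d$ gives a contribution $C\eta^{1/(d-1)}L^{d-1}$.

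On the slices where $|G|\ge\eta$ with $\eta\ge2\epsilon$, the one-variable estimate gives length at most $4\epsilon/\eta$. This contributes at most $4(\epsilon/\eta)L^{d-1}$.

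Balancing the two terms requires $\eta^{1/(d-1)}\sim\epsilon/\eta$, that is, $\eta=\epsilon^{(d-1)/d}$. Both terms then become $O(\epsilon^{1/d})L^{d-1}$. The condition $\eta\ge2\epsilon$ holds for $\epsilon<\epsilon_0$.

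The base case $d=1$ is the one-variable estimate itself, with $\beta=b$ fixed. Take $\epsilon_0=|b|/2$, which gives length at most $4\epsilon/|b|$. This is $\le C\epsilon$, which is stronger than required.

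\emph{Constant bookkeeping.} The inductive hypothesis must be applied with constant $b$ and arbitrary coefficients. The constants are therefore uniform in the coefficients and the box position, since only $b$ and $d$ ever enter. The factor $L\ge1$ is needed only because the slice bounds carry $L^{d-2}$ and must combine into $L^{d-1}$. Fubini's theorem over the box justifies the slice integration.

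The main obstacle is the degenerate slice case $\gamma=0$. There $\phi$ is constant, so the sublevel set in $y_d$ is either empty or the whole side. It occurs only when $|\beta|<\epsilon$, which lies inside the small-$G$ region already counted. So no bound on $\gamma$ is needed. Care is needed to state the inductive hypothesis with $\epsilon$ replaced by $\eta<\epsilon_0$, so the exponent choice must keep $\eta$ below the $(d-1)$-dimensional threshold. I would shrink $\epsilon_0$ at each level to guarantee this.
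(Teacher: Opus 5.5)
Your proposal is correct and matches the paper's proof essentially step for step: induction on $d$ through the decomposition $F=\beta(y')+\gamma(y')e^{y_d}$, the inductive bound on the slab where $|\beta|<\eta$, the monotone one-variable estimate $4\epsilon/\eta$ on the remaining slices, and the balancing choice $\eta=\epsilon^{(d-1)/d}$. Your preliminary choice $\eta=c\,\epsilon^{1/d}$ is a slip that agrees with this only for $d=2$, and you correctly replace it; you are also slightly more careful than the paper in shrinking $\epsilon_0$ so that $\eta\ge2\epsilon$ and $\eta$ stays below the $(d-1)$-dimensional threshold.
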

\begin{proof}
Divide by $b$. For $d=1$, the sublevel set of $1+ae^y$ is empty
unless there is cancellation; in that case its total length is at
most $4\epsilon$ for sufficiently small $\epsilon$.
For the inductive step write $F=A(y')+e^{y_d}B(y')$, where $A$ has
constant term one. The part with $|A|<\eta$ has volume at most
$C\eta^{1/(d-1)}L^{d-1}$ by induction and integration in $y_d$.
On every remaining slice, $|A|\ge\eta$ and the sublevel interval has
length at most $4\epsilon/\eta$, provided $\epsilon<\eta/2$.
Its contribution is at most $4\epsilon L^{d-1}/\eta$.
Choose $\eta=\epsilon^{(d-1)/d}$. Missing variables can be integrated
out and do not affect the asserted upper bound.
\end{proof}

Rescaling the support to $[0,1]$ makes nonzero monic orthogonal-polynomial
values smaller than one in modulus, giving nonnegative logarithmic losses.
Put $\Delta=\lambda_q-\lambda_1$ and
$\beta_i=(\lambda_i-\lambda_1)/\Delta$, so that
$0=\beta_1<\cdots<\beta_q=1$. Write $\widehat\pi$ for the monic
orthogonal polynomial for the same weights at these normalized nodes;
then $\pi(\lambda_i)=\Delta^m\widehat\pi(\beta_i)$. Formula~\eqref{eq:subset}
holds with these nodes. Let $\mathcal N_i(u)$ denote its numerator evaluated at
$\beta_i$; only subsets not containing $i$ occur.
To distinguish near-cancellation from a small normalizing monomial,
for each $i\ne r$ and each $m$-element subset $S_0$ containing $r$
but not $i$, put
$F_{i,S_0}(z)=\mathcal N_i(u)/u_{S_0},\qquad u_r=1.$
The $S_0$ term is a fixed nonzero constant. Reversing the signs of
the coordinates $z_j$ for $j\in S_0\setminus\{r\}$ makes all
variable exponents either zero or one. Also $F_{i,S_0}$ is independent
of $z_i$. Lemma~\ref{lem:thin}, followed by integration in $z_i$,
therefore gives, for $q\ge3$ and $d=q-1$,
\begin{equation}\label{eq:thinp}
 \vol_d\{z\in\mathcal B:|F_{i,S_0}(z)|<\epsilon\}
 \le C\epsilon^{1/(d-1)}L^{d-1}
\end{equation}
on a box $\mathcal B$ of side length $L\ge1$.
Only finitely many such polynomials occur. For $q=2$ they are
nonzero constants and no sublevel estimate is needed.

\subsection{Componentwise loss and Borel--Cantelli}

\begin{proof}[Proof of Theorem~\ref{thm:A1}]
\emph{1. Nonannihilation and the bad-loss event.} First remove exact annihilation states. Each $\mathcal N_i$ is a nonzero
polynomial: every subset coefficient not containing $i$ is nonzero.
Its zero set has measure zero. Equation~\eqref{eq:nonautonomous}
shows that all finite-time inverse images of these sets are null.
Their countable union can be excluded.

The roots of $\widehat\pi$ lie in $(0,1)$, so
$0<|\widehat\pi(\beta_i)|<1$. Define the nonnegative losses
\[
 a_{i,k}=-\log|\widehat\pi_k(\beta_i)|,\qquad
 S_{r,N}=\sum_{k=0}^{N-1}a_{r,k}.
\]
For each reference component, we show that cumulative loss eventually
exceeds every fixed multiple of $e^{\kappa N}$. Fix an initial box
$[-R,R]^d$, $M_0>0$, and $0<\kappa<\kappa_0$. Set
$W_N=M_0e^{\kappa N}$ and consider the bad event
\[
 \mathcal E_N=\{z_0\in[-R,R]^d:S_{r,N}\le W_N\}.
\]
Let $D_\delta=\sup_k\|\delta_k\|_\infty$. On this event, for
$0\le k\le N$, the projective recurrence gives
\begin{equation}\label{eq:upperz}
 z_{i,k}\le R+2\ell W_N+ND_\delta=:U_N.
\end{equation}
\emph{2. Confinement away from near cancellation.} The preceding bound
prevents upward escape on $\mathcal E_N$; we now control downward
excursions. Normalize $u_r=1$ at each time. If the other weights in
decreasing order are $t_1\ge\cdots\ge t_d$, the subset formula yields
\begin{equation}\label{eq:referenceweight}
 |\widehat\pi(\beta_r)|
 \le C\frac{t_1\cdots t_m}{t_1\cdots t_{m-1}}=Ct_m.
\end{equation}
For the numerator use the largest $m$ weights not including $r$;
for the denominator keep the term consisting of $r$ and the largest
$m-1$ others. All Vandermonde constants are fixed and positive in the
denominator. Since $a_{r,k}\le W_N$ for $k<N$, at least $m$ other
weights are at least $C^{-1}e^{-W_N}$.

For each $i\ne r$, choose $m-1$ of these weights excluding $i$ and
adjoin $r$ to obtain $S_0$. This choice guarantees a sufficiently large
normalizing monomial:
$u_{S_0}\ge C^{-1}e^{-(m-1)W_N},\qquad Z_m(u)\le Ce^{mU_N}.$
Choose $b>\kappa(d-1)^2$ when $d>1$ (and any $b>0$ when $d=1$), and put
$\epsilon_N=e^{-bN}$. If $|F_{i,S_0}(z_k)|\ge\epsilon_N$, then
\begin{equation}\label{eq:lowerfilter}
 \log|\widehat\pi_k(\beta_i)|
 \ge-bN-(m-1)W_N-mU_N-C=:-A_N.
\end{equation}
The reference filter contributes a nonnegative term to the update of
$z_i$, so $z_{i,k+1}\ge z_{i,k}-2\ell A_N-D_\delta$.
Define
$V_N=R+N(2\ell A_N+D_\delta),\qquad \mathcal B_N=[-V_N,U_N]^d.$
Its side length is $O(Ne^{\kappa N})$.
Call it a near-cancellation event if any of the finitely many
admissible polynomials $F_{i,S_0}$ has modulus below $\epsilon_N$;
the union includes every admissible $S_0$, not just the one selected
from the current weights. Let $\mathcal C_N$ be this union of sublevel
sets restricted to $\mathcal B_N$.
At the first such event $k<N$ on $\mathcal E_N$, the preceding
pointwise lower bounds place $z_k$ in $\mathcal B_N$, hence in $\mathcal C_N$.
If no such event occurs in the first $N$ steps, then $z_N\in\mathcal B_N$.
Consequently
\[
 \mathcal E_N\subseteq\Psi_N^{-1}\mathcal B_N
 \ \cup\ \bigcup_{k=0}^{N-1}\Psi_k^{-1}\mathcal C_N.
\]
\emph{3. Summable bad-event volumes.} Using \eqref{eq:nonautonomous}, \eqref{eq:thinp}, and
$\sum_{k\ge0}\rho^k<\infty$, we obtain
\begin{equation}\label{eq:badvolume}
 \vol(\mathcal E_N)\le C\left[
 N^d(\rho e^{\kappa d})^N+
 N^{d-1}e^{(\kappa(d-1)-b/(d-1))N}\right]
\end{equation}
for $d>1$. Both terms are summable. For $d=1$ the sublevel sets are
empty, and the first term alone gives the conclusion.

\emph{4. From component losses to actual gradients.} Borel--Cantelli, followed by countable unions over positive integral
$R,M_0$, gives $S_{r,N}/e^{\kappa N}\to\infty$ for almost every
initial $z$. Apply this argument to every reference component and a
countable sequence of $\kappa$ increasing to $\kappa_0$. The resulting
full-measure set is independent of the requested exponent: for any
$\kappa<\kappa_0$, choose a larger exponent from that sequence.
Writing $\Delta=\lambda_q-\lambda_1$, the physical polynomial
satisfies $\pi_k(\lambda_i)=\Delta^m\widehat\pi_k(\beta_i)$ and
$\lambda_1^m\le|\pi_k(0)|\le\lambda_q^m$.
Equation~\eqref{eq:A} and \eqref{eq:samplingbound} therefore give
$\log\|E_iF_N\|=\log\|E_iF_0\|-\ell S_{i,N}+O(N).$
This proves double-exponential decay of all endpoint components.
To transfer it to individual updates, put
$L_{\rm cyc}=m(\ell+1)$ and $K=\max\{1,\overline d,M\}$, with
$M$ from Lemma~\ref{lem:transfer}. Both sampling and Ritz updates
have norm at most $K$, uniformly in the cycle. Hence
$\|g_{NL_{\rm cyc}+j}\|\le K^{L_{\rm cyc}}\|F_N\| \qquad(0\le j<L_{\rm cyc}).$
Since $t/L_{\rm cyc}\le N+1$ in this block, a change of the positive
constant in the endpoint estimate gives \eqref{eq:Arate}, including
every sampling update. Finally, block polar coordinates and Fubini
lift the log-ratio null set to the full initial-point space.
\end{proof}

\begin{remark}[Prescribed and adaptive sampling]
The steps $\tau_{k,j}$ need not be equal, but the present theorem
requires their entire schedule to be independent of the trajectory.
If the sampling steps depend on $z$, then $\delta_k=\delta_k(z)$ and
the derivative in \eqref{eq:translated} contains $D\delta_k$.
Condition~\eqref{eq:samplingbound} alone no longer establishes the
constant Jacobian or inverse-branch count. General continuously varying
adaptive sampling is not covered by Theorem~\ref{thm:A1}. 
\end{remark}

\subsection{Adaptive choices from a finite sampling menu}\label{sec:adaptiveA}

The preceding theorem permits many prescribed sampling lists. For
algorithm design, a further question is whether one may choose among
them using the computed history. The answer below quantifies the
cost of that freedom through the number of possible menu words;
sparse decisions recover the original sufficient condition and rate
range. The repetition count remains fixed in this extension.

Fix a finite menu of $R$ prescribed lists, each consisting of $m$
positive sampling steps. All lists obey \eqref{eq:samplingbound} with
common constants. At allowed decision times the algorithm may choose
a list using the entire history of the computation. Restrictions on
the decision times, if any, are fixed in advance. Let $\mathcal W_N$
be the family of all allowed words of $N$ lists, independent of which
words a particular policy actually selects. Define
\begin{equation}\label{eq:entropy}
 h=\limsup_{N\to\infty}\frac{\log|\mathcal W_N|}{N}.
\end{equation}
Only the exponential growth of this word count matters; no
differentiability of the selection rule is assumed.

\begin{theorem}[History-dependent sampling with a word-count bound]\label{thm:adaptiveA}
Let $\rho=\binom{q-1}{m}/(2\ell-1)^m$, with $\ell\ge2$. Under the
finite-menu assumptions above, suppose
\begin{equation}\label{eq:adaptivecondition}
 h+\log\rho<0.
\end{equation}
Then a common full-measure set of initial points has doubly
exponential convergence for every allowed adaptive selection policy.
For each
$0<\kappa<\frac{\log(1/\rho)-h}{q-1},$
the actual-gradient bound \eqref{eq:Arate} holds. In particular:
\begin{enumerate}
\item Choosing freely among $R$ lists every cycle is covered if
$(2\ell-1)^m>R\binom{q-1}{m}$.
\item Choosing once every $J$ cycles and holding that list for the
next $J$ cycles is covered if
$(2\ell-1)^m>R^{1/J}\binom{q-1}{m}$.
\item Choosing at the prescribed cycle indices $0,1,2,4,8,\ldots$
and holding the last choice between decisions preserves both the
condition and the rate range of Theorem~\ref{thm:A1}.
\end{enumerate}
\end{theorem}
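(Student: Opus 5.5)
The plan is to rerun the proof of Theorem~\ref{thm:A1} with one change: the bad event becomes a union over all allowed menu words, so a single full-measure set serves every selector at once. For a fixed word $w=(w_0,\ldots,w_{N-1})\in\mathcal W_N$, the cycle-$k$ translation $\delta_k^{(w)}$ in \eqref{eq:translated} is a constant vector determined by the list $w_k$. Consequently $\Psi_N^{(w)}=T_{N-1}^{(w)}\circ\cdots\circ T_0^{(w)}$ is a composition of prescribed maps, and \eqref{eq:nonautonomous} gives $\vol((\Psi_N^{(w)})^{-1}E)=\rho^N\vol(E)$. Because every list in the menu obeys \eqref{eq:samplingbound} with common constants, $D_\delta$ and the constants of Steps~1--3 do not depend on $w$. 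The trajectory produced by any adaptive policy agrees, up to time $N$, with the trajectory of the word that policy actually selects, and that word lies in $\mathcal W_N$. For each $R,M_0$ the bad event therefore satisfies
\[
 \mathcal E_N\subseteq\bigcup_{w\in\mathcal W_N}\Bigl[(\Psi_N^{(w)})^{-1}\mathcal B_N\cup\bigcup_{k<N}(\Psi_k^{(w)})^{-1}\mathcal C_N\Bigr],
\]
where $\Psi_k^{(w)}$ depends only on the prefix of $w$ of length $k$. This set is defined without reference to any policy, which is what makes the exceptional set common to all selectors. The selection rule needs no measurability or differentiability, since only finitely many fixed words occur at each $N$.

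The volume estimate then picks up a factor $|\mathcal W_N|\le e^{(h+\eta)N}$ for any $\eta>0$ and large $N$. The box term becomes $N^d(e^{h+\eta}\rho e^{\kappa d})^N$, which is summable once $\kappa d<\log(1/\rho)-h-\eta$; this is the stated range for $\kappa$ after letting $\eta\downarrow0$. For the near-cancellation term I bound the union over $k<N$ and over prefixes by $|\mathcal W_N|\sum_k\rho^k\vol(\mathcal C_N)$, where $\mathcal W_N$ dominates the prefix families, or more crudely by $N\,|\mathcal W_N|$ times $\vol(\mathcal C_N)$. This produces $e^{(h+\eta)N}N^{d-1}e^{(\kappa(d-1)-b/(d-1))N}$. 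I handle it by choosing $b>(d-1)(\kappa(d-1)+h+1)$. This is legitimate because $b$ was a free parameter in \eqref{eq:lowerfilter}, and increasing it only enlarges $A_N$ and $V_N$ polynomially in $N$ times $e^{\kappa N}$, so the side length of $\mathcal B_N$ stays $O(Ne^{\kappa N})$. For $d=1$ the sublevel sets are empty, as before. Borel--Cantelli, countable unions over $R$, $M_0$, the reference indices and a sequence $\kappa\uparrow$, together with Step~4 (whose constants are uniform over the menu), give \eqref{eq:Arate} for every policy on one full-measure set. The polar-coordinate lift is unchanged.

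For the three cases I only need to count words. Free choice every cycle gives $|\mathcal W_N|=R^N$, so $h=\log R$, and $h+\log\rho<0$ is exactly $(2\ell-1)^m>R\binom{q-1}{m}$. Choosing every $J$ cycles gives $|\mathcal W_N|=R^{\lceil N/J\rceil}$ and $h=(\log R)/J$, which yields the second condition. Decisions at $0,1,2,4,8,\ldots$ give $|\mathcal W_N|\le R^{2+\log_2N}$, so $h=0$; condition \eqref{eq:adaptivecondition} then reduces to \eqref{eq:Acondition}, and the $\kappa$ range becomes $\kappa_0$. The same sparse-decision count also covers any geometric spacing.

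The main obstacle is confirming that every estimate in the proof of Theorem~\ref{thm:A1} is uniform over words, so that the union bound only costs $|\mathcal W_N|$. This means checking that the inverse-branch count, the Jacobian, and the sublevel constants in \eqref{eq:thinp} do not depend on $\delta_k$; they do not, since $\delta_k$ is a pure translation and $\mathcal C_N$ is defined on the untranslated coordinates at time $k$. The other delicate point is that enlarging $b$ to absorb the entropy factor does not destroy the summability of the box term, and this holds because $b$ enters only through polynomial factors in $N$.
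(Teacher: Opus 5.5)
Your proposal is correct and follows essentially the paper's argument. It uses a policy-independent bad event formed as a union over allowed words, the factor $|\mathcal W_N|\le e^{(h+\eta)N}$ on the terminal-box term, and Borel--Cantelli on one common full-measure set, with the Step~1 annihilation sets also excluded over all finite words, which you leave implicit. The three word counts are the same, and the only difference is in the sublevel-entry terms: the paper bounds them by $\vol(\mathcal C_N)\sum_{k<N}|\mathcal W_k|\rho^k$, which stays uniformly bounded because $h+\eta+\log\rho<0$, so the original $b$ suffices, while your cruder factor $|\mathcal W_N|$ compensated by a larger $b$ is equally valid.
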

\begin{proof}
For each fixed word $w$ of length $k$, the corresponding composition
$\Psi_w$ consists of prescribed translations of $T$, and hence
$\vol(\Psi_w^{-1}E)=\rho^k\vol(E)$.
Taking the union over allowed words gives the following bound,
where $\vol^*$ denotes Lebesgue outer measure:
\begin{equation}\label{eq:wordvolume}
 \vol^*\left(\bigcup_{w\in\mathcal W_k}\Psi_w^{-1}E\right)
 \le|\mathcal W_k|\rho^k\vol(E).
\end{equation}
This union contains the inverse images realized by any history-dependent
policy, even one that is not a function of the current log ratios alone.
All translations, sampling factors, and intermediate-step bounds are
uniform across the finite menu.

Fix $\kappa$ in the stated range and choose $\eta>0$ such that
$h+\eta+\log\rho+\kappa(q-1)<0$. There is $C_\eta$ with
$|\mathcal W_k|\le C_\eta e^{(h+\eta)k}$ for all $k$.
Repeat the proof of Theorem~\ref{thm:A1}, declaring the bad event
to occur if \emph{some} allowed word has $S_{r,N}\le M_0e^{\kappa N}$.
The terminal box term is bounded by
$C_\eta e^{(h+\eta)N}\rho^N\vol(\mathcal B_N)$.
The first sublevel-entry terms are bounded by
$\vol(\mathcal C_N)\sum_{k=0}^{N-1} C_\eta e^{(h+\eta)k}\rho^k,$
whose sum is uniformly bounded. The same sublevel width makes the
latter terms summable, and the chosen inequality makes the terminal
term summable. Borel--Cantelli excludes bad words eventually, on
one full-measure set simultaneously for all policies. The bad
events are unions over allowed finite words before any policy is
chosen; we do not take an uncountable intersection of policy-specific
full-measure sets. Exact annihilation sets over all finite words form
a countable null union. Uniform sampling bounds and the fixed number
$m(\ell+1)$ of updates per cycle give the same radial and
intermediate-update estimates for every allowed policy.

The three special cases have word counts at most $R^N$,
$R^{\lceil N/J\rceil}$, and $R^{2+\log_2(N+1)}$, respectively.
The last is polynomial in $N$, so $h=0$.
\end{proof}

\begin{remark}
For example, $q=5,m=2,\ell=3$ gives $\rho=6/25$; arbitrary
history-dependent choices between two safe sampling lists are covered
since $2\rho<1$. More generally, whenever the original strict
condition $\rho<1$ holds, a sufficiently large fixed holding length
$J$ permits any fixed finite menu. The sparse decision schedule in
the third case requires no loss at all in the original asymptotic
rate bound. These are restrictions on how sampling lists are selected;
the number of actual updates per cycle is still $m(\ell+1)$.
\end{remark}

For an implementable menu, let $L\ge\lambda_q$ be a known spectral upper
bound. Lists with $0<\tau_j\le c/L$, $0<c<1$, obey
$1-c\le1-\tau_j\lambda_i\le1$, so their admissibility can be checked
without individual eigenvalues. A finite menu may select among such
lists using observed norms or history diagnostics, at the decision
frequencies allowed by Theorem~\ref{thm:adaptiveA}. In exact arithmetic
these lists span the same cycle-start Krylov space. The choice changes
the sampling polynomial and numerical conditioning, rather than
permitting arbitrary Ritz nodes. The following illustration uses
one elementary selector in this admissible class.

\subsection{A finite-tolerance illustration of SCR}\label{sec:illustration}

We compare update counts in configurations covered by SCR but outside
the rolling regimes proved above. For $m\in\{1,2\}$ and $q\in\{3,4,5\}$,
choose the smallest
$\ell\ge2$ satisfying~\eqref{eq:Acondition}. Exclude $q=m+1$, as well
as $m+1<q\le2m$ when $\ell\ge4$. This gives all five rows in
Table~\ref{tab:illustration}. Use $n=q$, $b=0$, and geometric spectra
$\lambda_i=\kappa^{(i-1)/(q-1)}$, $\kappa=10^2,10^4$, with ten
paired random starts per spectrum and $\|g_0\|=1$. MATLAB R2023a
uses \texttt{rng(1,'twister')}: Gaussian QR gives the eigenvectors;
independent spectral log-amplitudes are uniform on $[-1,1]$, with
random signs. The methods share each matrix and initial point.
RCR initializes with $m$ updates of $0.5/L$. SCR starts with the same list and, at
cycles $1,2,4,8,\ldots$, selects
the constant list $0.9/L$ if $\|F_k\|>\|F_{k-1}\|/2$ and $0.5/L$
otherwise, holding its choice between decisions; $L=\lambda_q$.
Both methods use float64, column-scaled QR, a $10^{-12}$ relative
history-rank cutoff, and a 3,000-update limit.

\begin{table}[!ht]
\centering\small
\begin{tabular}{@{}rrrccc@{}}
\toprule
$m$ & $q$ & $\ell$ & Success S/R & Median updates S/R & S wins/ties/losses\\
\midrule
1 & 3 & 2 & 20/20 & 32/41 & 16/0/4 \\
1 & 4 & 3 & 20/20 & 48/45.5 & 7/1/12 \\
1 & 5 & 3 & 20/20 & 66/80 & 12/0/8 \\
2 & 4 & 2 & 20/20 & 28/40 & 19/0/1 \\
2 & 5 & 2 & 20/20 & 58/73.5 & 14/2/4 \\
\bottomrule
\end{tabular}
\caption{SCR with the sparse-feedback selector (S) versus RCR (R),
20 paired problems per row, at $\|g_t\|/\|g_0\|\le10^{-8}$.
Medians and wins/ties/losses use common successes;
updates include initialization and sampling.}
\label{tab:illustration}
\end{table}

For the displayed seed, both methods reach the tolerance on all
100 paired problems. SCR uses fewer updates on 68, ties on three,
and uses more on 29; the benefit depends on $(m,q,\ell)$.
These configurations complete a small illustrative range selected
after exploratory testing. Numerical rank loss or invalid Ritz
extraction can stop a run before the tolerance is reached; the
exact-arithmetic guarantee does not imply success for every seed
in floating point.

\section{Conclusions}

Accounting for the historical offset yields doubly exponential convergence
of actual gradient iterations. The specified rolling LMSDR rule has an
exact rate outside explicit exceptional weights at $q=m+1$ and nonempty
open basins for $m+1<q\le2m$, $\ell\ge4$. Rebuilding the history in SCR
gives almost-everywhere convergence under
$(2\ell-1)^m>\binom{q-1}{m}$. Its finite-menu extension permits
history-dependent sampling, with sparse decisions preserving the
condition and rate range. The finite-tolerance illustration records
possible update savings and their configuration dependence.

Open questions include the prevalence of the rolling basins, a rolling
dimension condition that grows with repetition, and superlinear behavior
beyond $q=2m$, including generic three-dimensional BB1. Sampling design
and finite-precision historical Ritz extraction also require further
analysis. Beyond quadratics, changing Hessians and noncommuting updates
call for a separate history-quality analysis.

\appendix
\section{Constants for separated and overlapping basins}\label{app:constants}

The constants below make the bounds in Theorem~\ref{thm:B2} explicit
using finite subset sums and elementary recurrence estimates. The choice
is conservative. The fixed spectrum is used for analysis; the
gradient-only implementation does not require individual eigenvalues.

For every $m$-element subset $S$, put
$z_S=\Delta_S^2$, $Z=\sum_Sz_S$, and
$c_{iS}=z_S\prod_{j\in S}(\lambda_i-\lambda_j)$.
For Theorem~\ref{thm:B2}, consider $K=I,J$ and let $L$ be its
complement. For Theorem~\ref{thm:overlap}, use the two size-$m$
sets consisting of the first or last $m$ support points instead.
In both cases the complement is nonempty and entirely on one
side of $K$; the formulas below do not require $|L|=m$.
For $i\in K$ define
\[
\begin{aligned}
 \mathcal L_{Ki}&=\{(K\setminus\{i\})\cup\{j\}:j\in L\},\qquad
 a_{Ki}=\min_{S\in\mathcal L_{Ki}}|c_{iS}|,\\
 A_{Ki}&=\sum_{S\in\mathcal L_{Ki}}|c_{iS}|,\qquad
 R_{Ki}=\sum_{\substack{|S|=m,\ i\notin S\\S\notin\mathcal L_{Ki}}}|c_{iS}|.
\end{aligned}
\]
For $i\in L$, define $a_{Ki}=A_{Ki}=|c_{iK}|$ and
$R_{Ki}=\sum_{S\ne K}|c_{iS}|$, with zero terms harmless.
All $a_{Ki}$ are positive. With ratios involving $R_{Ki}=0$ omitted, set
\[
 \epsilon_* =\min\left\{\frac12,\min_{K,i:R_{Ki}>0}
                         \frac{a_{Ki}}{2R_{Ki}}\right\},
 \qquad\Gamma_* =\frac12\log(1/\epsilon_*).
\]
Valid lower and upper multiplicative constants for the normalized
filter estimate are
\[
 l_{Ki}=\frac{a_{Ki}}{2Z\lambda_q^m},\qquad
 u_{Ki}=\frac{A_{Ki}+R_{Ki}}{z_K\lambda_1^m},\qquad
 C=\max_{K,i}\{|\log l_{Ki}|,|\log u_{Ki}|\}.
\]
For $i\in K$, these constants multiply $\max_Lu/u_i$; for $i\in L$
they bound $|p(\lambda_i)|$ directly. To check the normalization,
write $u_*=\max_{j\in L}u_j$ and
$\epsilon=u_*/\min_{j\in K}u_j=e^{-2\Gamma}$. For $i\in K$, divide
the numerator of \eqref{eq:subset} at $\lambda_i$ by
$u_{K\setminus\{i\}}$. Its leading terms have the same sign and their
absolute sum lies between $a_{Ki}u_*$ and $A_{Ki}u_*$. Every remaining
nonzero term makes at least one additional replacement of a strong
weight by a weak weight, so the remainder has absolute value at most
$R_{Ki}\epsilon u_*$. Since $\epsilon\le\epsilon_*$, the full absolute
numerator therefore lies between
$(a_{Ki}/2)u_{K\setminus\{i\}}u_*$ and
$(A_{Ki}+R_{Ki})u_{K\setminus\{i\}}u_*$. For $i\in L$, division by
$u_K$ instead gives a leading term of absolute value $a_{Ki}=A_{Ki}$
and a remainder of absolute value at most $R_{Ki}\epsilon$.
The denominator lies between $z_Ku_K$ and $Zu_K$, and
$\lambda_1^m\le|\pi(0)|\le\lambda_q^m$. These bounds prove
Lemma~\ref{lem:twobound} with the displayed constants, and also verify
the one-sided estimate \eqref{eq:oneSidedFilter} needed for the
overlapping construction.

For $a,B,\nu,\sigma,\varrho$ as in \eqref{eq:scales}, put
\[
\begin{gathered}
 K_0=\ell C+\max_i|\log|d(\lambda_i)||,\qquad
 C_0=1+2K_0+\frac{4K_0}{B-1},\\
 C_s=(a+1)C_0+2K_0,\quad C_e=(a+3)C_s+2K_0,\quad
 C_f=\frac{\sigma}{\sigma-\varrho},\quad
 C_d=\frac{C_eC_f}{\sigma-\nu}.
\end{gathered}
\]
The centered errors in \eqref{eq:spread} are at most $4K_0$;
the first tail-group spread is at most $S_0+2K_0$. Summing that
two-step recurrence gives \eqref{eq:Sbound} with this $C_s$.
The gap forcing is bounded by $C_e(S_0+1)\nu^k$, and its convolution
in \eqref{eq:Dvariation} is bounded by $C_d(S_0+1)\sigma^k$.
Consequently an explicit sufficient threshold is
\begin{equation}\label{eq:explicitD}
 D_*(s)=1+\max\{2C_d(s+1),\ 8C_s(s+1),\ 4\Gamma_*\}.
\end{equation}
It gives the finite-time bounds \eqref{eq:Dcontrol} and
\eqref{eq:invariance} from the first anchor onward.

\paragraph{Common-component coefficients for overlapping sets}
For Theorem~\ref{thm:overlap}, the outer-gap and common-component
recurrences share the forcing constant $C_e$ and convolution constant
$C_d$. The calculation below checks both the recurrent updates and the
actual initialization, using the one-sided filter constants for the two
overlapping sets.
Let $M_k$ be the largest log amplitude in the weak outer group and
set $e_k=M_k-U_k$, so $0\le e_k\le S_k$.
For $k\ge1$, write the three updates with their bounded errors as
\[
\begin{aligned}
 T_{k+1}&=U_k-2T_{k-1}+2M_{k-1}+\eta^T_k,\\
 U_{k+1}&=-aT_k+(a+1)M_k+\eta^U_k,\\
 P_{i,k+1}&=-aP_{i,k}+(a+1)M_k-2P_{i,k-1}
                  +2M_{k-1}+\eta_{i,k}\quad(i\in C).
\end{aligned}
\]
The filter bound gives
$|\eta^T_k|,|\eta^U_k|,|\eta_{i,k}|\le K_0$.
Subtracting $(T_{k+1}+U_{k+1})/2$ from the common update now gives
\[
 F_{i,k}=\frac{a+1}{2}e_k+e_{k-1}+\eta_{i,k}
                    -\frac{\eta^T_k+\eta^U_k}{2}.
\]
Thus $|F_{i,k}|\le(a+1)S_k/2+S_{k-1}+2K_0$.
At initialization, the actual updates are
\[
\begin{gathered}
 T_1=U_0+\eta^T_0,\qquad
 U_1=-aT_0+(a+1)M_0+\eta^U_0,\\
 P_{i,1}=-aP_{i,0}+(a+1)M_0+\eta_{i,0}.
\end{gathered}
\]
The initialization errors also have absolute value at most $K_0$.
The same subtraction gives
$F_{i,0}=(a+1)e_0/2+\eta_{i,0}-(\eta^T_0+\eta^U_0)/2$,
so $|F_{i,0}|\le(a+1)S_0/2+2K_0$.
Using $S_k\le C_s(S_0+1)\nu^k$, both bounds imply
$|F_{i,k}|\le C_e(S_0+1)\nu^k$ with the same
$C_e=(a+3)C_s+2K_0$ as for the outer gap.
The variable $X_{i,k}=(-1)^kY_{i,k}$ satisfies the gap recurrence with
forcing $(-1)^{k+1}F_{i,k}$ and virtual initial state $X_{i,-1}=0$.
Applying \eqref{eq:commonConvolution} with $E=C_e(S_0+1)$ gives
the common-displacement bound with $C_d=C_eC_f/(\sigma-\nu)$.

\bibliographystyle{siamplain}
\Needspace{4\baselineskip}
\bibliography{references}
\end{document}